\documentclass[ejs]{imsart}
\RequirePackage[OT1]{fontenc}
\RequirePackage{amsthm,amsmath,natbib}
\RequirePackage[colorlinks,citecolor=blue,urlcolor=blue]{hyperref}
\usepackage{amsmath}
\usepackage{graphicx,psfrag,epsf}
\usepackage{enumerate}
\usepackage{url}
\usepackage{pdfsync}
\usepackage{amssymb}
\usepackage[latin1]{inputenc}
\usepackage{amsthm}
\usepackage[english]{babel}
\topmargin -.5in \oddsidemargin=-12pt \evensidemargin=-12pt
\parindent=1em
\headheight=9pt \headsep=15pt \textwidth= 7in \textheight 9.55in

{\catcode `\@=11 \global\let\AddToReset=\@addtoreset}
\AddToReset{equation}{section}

\newtheorem{theorem}{Theorem}[section]
\newtheorem{lemma}{\bf Lemma}[section]

\newtheorem{proposition}{Proposition}[section]
\newtheorem{@definition}{\sc Definition}[section]

\newtheorem{@remark}{\sc Remark}[section]

\newtheorem{@example}{\sc Example}[section]

\newcommand{\beqn}{\begin{displaymath}}
\newcommand{\eeqn}{\end{displaymath}}
\newcommand{\beq}{\begin{equation}}  
\newcommand{\eeq}{\end{equation}}

\def\mathsf{\bf}
\def\N{\mathbb{N}}

\def\L{\mathbb{L}}
\def\R{\mathbb{R}}
\def\Z{\mathbb{Z}}

\def\E{\mathrm E}
\def\P{\mathrm P}

\def\text{\mbox}

\def\1{{\bf 1}}

\newcommand{\Var}{\mbox{Var}}

\newcommand{\Loi}{\mathcal{L}}

\def\egalloi{\renewcommand{\arraystretch}{0.5}
\begin{array}[t]{c}
\stackrel{{\Loi}}{=}
\end{array}\renewcommand{\arraystretch}{1}}

\def\limiteloin{\renewcommand{\arraystretch}{0.5}
\begin{array}[t]{c}
\stackrel{{\cal D}}{\longrightarrow} \\
{\scriptstyle n \rightarrow\infty}
\end{array}\renewcommand{\arraystretch}{1}}

\def\limiteproba{\renewcommand{\arraystretch}{0.5}
\begin{array}[t]{c}
\stackrel{{\cal P}}{\longrightarrow} \\
{\scriptstyle n \rightarrow\infty}
\end{array}\renewcommand{\arraystretch}{1}}

\def\limitepsn{\renewcommand{\arraystretch}{0.5}
\begin{array}[t]{c}
\stackrel{a.s.}{\longrightarrow} \\
{\scriptstyle n\rightarrow\infty}
\end{array}\renewcommand{\arraystretch}{1}}

\def\limitet{\renewcommand{\arraystretch}{0.5}
\begin{array}[t]{c}
\stackrel{}{\longrightarrow} \\
{\scriptstyle t\rightarrow\infty}
\end{array}\renewcommand{\arraystretch}{1}}

\def\limitet0{\renewcommand{\arraystretch}{0.5}
\begin{array}[t]{c}
\stackrel{}{\longrightarrow} \\
{\scriptstyle t\rightarrow 0}
\end{array}\renewcommand{\arraystretch}{1}}

\vfuzz2pt 
\hfuzz2pt 




\arxiv{arXiv:0000.0000}

\startlocaldefs
\numberwithin{equation}{section}
\theoremstyle{plain}

\theoremstyle{remark}

\endlocaldefs

\begin{document}

\begin{frontmatter}
\title{Asymptotic behavior of the Laplacian quasi-maximum likelihood estimator of affine causal processes}
\runtitle{Asymptotic behavior of the Laplacian QMLE}

\begin{aug}

\author{\fnms{Jean-Marc} \snm{Bardet}\thanksref{a,e1}\ead[label=e1,mark]{bardet@univ-paris1.fr}}
\author{\fnms{Yakoub} \snm{Boularouk}\thanksref{b,e2}\ead[label=e2,mark]{y.boularouk@centre-univ-mila.dz}}
\and
\author{\fnms{Khedidja} \snm{Djaballah}\thanksref{b,e3}%
\ead[label=e3,mark]{khdjeddour@hotmail.com}}

\address[a]{S.A.M.M., Universit\'{e} Panth\'eon-Sorbonne, 90, rue de Tolbiac, 75634, Paris, France.
\printead{e1}}

\address[b]{Universit\'{e} des Sciences et de la Technologie Houari Boum\'{e}diene, Alger, Alg\'{e}rie.
\printead{e2}, \printead{e3}}

\runauthor{Bardet {\it et al.}}

\affiliation{Some University and Another University}

\end{aug}

\begin{abstract}
~We prove the consistency and asymptotic normality of the Laplacian Quasi-Maximum Likelihood Estimator (QMLE) for a general class of causal time series including ARMA, AR($\infty$), GARCH, ARCH($\infty$), ARMA-GARCH, APARCH, ARMA-APARCH,..., processes. We notably exhibit the advantages (moment order and robustness) of this estimator compared to the classical Gaussian QMLE. Numerical simulations confirms the accuracy of this estimator.
\end{abstract}

\begin{keyword}[class=AMS]
\kwd[Primary ]{62M10}
\kwd{62M10}
\kwd[; secondary ]{60G10}
\end{keyword}

\begin{keyword}
\kwd{Laplacian Quasi-Maximum Likelihood Estimator}
\kwd{Strong consistency}
\kwd{Asymptotic normality}
\kwd{ARMA-ARCH processes}
\end{keyword}

\end{frontmatter}

\section{Introduction}
This paper is devoted to establish the consistency and the asymptotic normality of a parametric estimator for a general class of time series. This class was already defined and studied in \cite{DW}, \cite{BW} and \cite{BKW}. Hence, we will consider an observed sample $(X_1,\cdots,X_n)$ where $(X_t)_{t\in \Z}$ is a solution of the following equation:
\begin{equation}\label{eq::sys}
     X_t =M_{\theta_0}(X_{t-1},X_{t-2}, \cdots) \,   \zeta_t +  f_{\theta_0}(X_{t-1},X_{t-2}, \cdots), \qquad  t \in Z,
\end{equation}
where
\begin{itemize}
\item $\theta_0\in \Theta \subset \R^d$, $d\in \N^*$, is an unknown vector of parameters,  also called the "true" parameters;
\item $(\zeta_t)_{t\in \Z}$ is a sequence of centred independent identically distributed random variables (i.i.d.r.v.) with symmetric probability distribution, {\it i.e.} $\zeta_0 \egalloi -\zeta_0$, satisfying $\E [|\zeta_0|^r]<\infty$ with $r\geq 1$ and $\E [|\zeta_0|]=1$. If $r\geq 2$, denote $\sigma_\zeta^2=\Var(\zeta_0)$;
\item $(\theta,(x_n)_{n\in \N}) \to M_\theta((x_n)_{n\in \N}) \in (0,\infty)$ and $(\theta,(x_n)_{n\in \N}) \to f_\theta((x_n)_{n\in \N})\in \R$ are two known applications.
\end{itemize}
For instance, if $M_{\theta_0}(X_{t-1},X_{t-2}, \cdots)=1$ and $f_{\theta_0}(X_{t-1},X_{t-2}, \cdots)=\alpha_0 \, X_{t-1}$ with $|\alpha_0|<1$ then $(X_t)$ is a causal AR($1$) process. In \cite{DW} and \cite{BW}, it was proved that all the most famous stationary time series used in econometrics, such as ARMA, AR($\infty$), GARCH, ARCH($\infty$), TARCH, ARMA-GARCH processes can be written as a causal stationary solution of (\ref{eq::sys}). \\
In \cite{BW}, it was also established that under several conditions on $M_\theta$, $f_\theta$ and if $\E [|\zeta_0|^r]$ with $r\geq 2$, the usual Gaussian Quasi-Maximum Likelihood Estimator (QMLE) of $\theta$ is strongly consistent and when $r\geq 4$ it is asymptotically normal. This estimator was first defined by \cite{W} for ARCH processes, and the asymptotic study of this estimator was first obtained by \cite{L} for GARCH$(1,1)$ processes, \cite{BHK} for GARCH$(p,q)$ processes, \cite{FZ2004} for ARMA-GARCH processes, \cite{SM} for general heteroskedastic models, and \cite{RZ} for ARCH$(\infty)$ processes. The results of \cite{BW} devoted to processes satisfying almost everywhere (\ref{eq::sys}) as well as its multivariate generalisation, provide a general and unified framework for studying the asymptotic properties of the Gaussian QMLE. \\
However, the definition of the Gaussian QMLE is explicitly obtained with the assumption that $(\zeta_t)$ is a Gaussian sequence and even if it could be applied when the probability distribution of $(\zeta_t)$ is non-Gaussian, it keeps some drawbacks of this initial assumption. Indeed, the computation of this estimators requires the minimization of a least squares contrast (typically $\sum_{t=1}^n M_\theta^{-2} (X_t-f_\theta)^2$) and this induces that $r=2$ is required for the consistency and $r=4$ for the asymptotic normality (and therefore confidence intervals or tests). For numerous real data such requirement is sometimes too strong (for instance the kurtosis of economic data is frequently considered as infinite). Moreover, such estimator is not robust to potential outliers. Hence, the reference probability distribution of $(\zeta_t)$ could be a Laplace one and this allows to avoid both these drawbacks. Roughly speaking this choice implies to minimize a least absolute deviations contrast (typically $\sum_{t=1}^n M_\theta^{-1} |X_t-f_\theta|$) instead of the previous least squares contrast. And therefore, $r=1$ will be sufficient for insuring the strong consistency of this Laplacian-QMLE, while only $r=2$ is required for the asymptotic normality (see below). \\
Such probability distribution choice is not new since this leads to a Least Absolute Deviations (LAD) estimation. Hence, for ARMA processes, \cite{DD} proved the consistency and asymptotic normality of the LAD estimator. For ARCH or GARCH processes, the same results concerning the LAD estimator were already established by \cite{PY}, while \cite{BH} proved the consistency and asymptotic normality of the Laplacian-QMLE. \cite{NS} considered also the estimator for other conditional heteroskedasticity models. Recently, \cite{FLZ} proposed a two-stage non-Gaussian-QML estimation for GARCH models and \cite{FZ2015} proposed an alternative one-step procedure, based on an appropriate non-Gaussian-QML estimator, the asymptotic properties of both these approaches were studied. \\
In this paper we unify all these studies of the Laplacian-QMLE in a simple framework, {\it i.e.} causal stationary solutions of (\ref{eq::sys}). This notably allows to obtain known results on ARMA or GARCH but also to establish for the first time the consistency and the asymptotic normality of the Laplacian-QMLE for APARCH, ARMA-GARCH, ARMA-ARCH$(\infty)$ and ARMA-APARCH processes. \\
Numerical Monte-Carlo experiments were realized to illustrate the theoretical results. And the results of these simulations are convincing, especially when the accuracy of Laplacian-QMLE is compared with the one of Gaussian-QMLE: except for Gaussian distribution of $(\zeta_t)$, the Laplacian-QMLE provides a sharper estimation than the Gaussian-QMLE for all the other probability distributions we considered. This is notably the case, and this is not a surprise, for a Gaussian mixing which mimics the presence of outliers. This provides an effective advantage of the Laplacian QMLE compared to the Gaussian QMLE.\\
~\\
The following Section \ref{DefAss} will be devoted to provide the definitions and assumptions. In Section \ref{Main} the main results are stated with numerous examples of application, while Section \ref{Simu} presents the results of Monte-Carlo experiments and Section \ref{Proofs} contains the proofs.

\section{Definition and assumptions} \label{DefAss}

\subsection{Definition of the estimator}
Let $(X_1,\cdots,X_n)$ be an observed trajectory of $X$ which is an a.s. solution of (\ref{eq::sys}) where $\theta \in \Theta \subset \R^d$ is unknown. For estimating $\theta$ we consider the log-likelihood of $(X_1,\cdots,X_n)$ conditionally to $(X_0,X_{-1},\cdots)$. If $h$ is the probability density (with respect to Lebesgue measure) of $\zeta_0$, then, from the affine causal definition of $X$, this log-likelihood can be written:
$$
\log \big (L_\theta(X_1,\cdots,X_n)\big )= \sum_{t=1}^n \log \Big (\frac 1 {M_\theta^t} \, h\Big ( \frac {X_t-f_\theta^t}{M_\theta^t} \Big )\Big )
$$
where $M_\theta^t:=M_\theta (X_{t-1},X_{t-2},\cdots)$ and $f_\theta^t:=f_\theta (X_{t-1},X_{t-2},\cdots)$, with the assumption that $M_\theta^t>0$. However, $M_\theta^t$ and $f_\theta^t$ are generally not computable since $X_0,X_{-1},\ldots$ are unknown. Thus, a quasi-log-likelihood is considered instead of the log-likelihood and it is defined by:
$$
\log \big (QL_\theta(X_1,\cdots,X_n)\big )= \sum_{t=1}^n \log \Big (\frac 1 {M_\theta^t} \, h\Big ( \frac {X_t-\widehat f_\theta^t}{\widehat M_\theta^t} \Big )\Big ),
$$
with $ \widehat
f_\theta^t:=f_{\theta}(X_{t-1},\ldots,X_1,u)$ and $\widehat
M_\theta^t:=M_{\theta}(X_{t-1},\ldots,X_1,u)$ , where
$u = (u_n)_{n\in N}$ is a finitely non-zero sequence $(u_n)_{n\in N}$. The choice of $(u_n)_{n\in N}$ does not
have any consequences on the asymptotic behaviour of  $L_n$, and $(u_n)$ could typically be chosen as a sequence of zeros. Finally, if it exists, a Quasi-Maximum Likelihood Estimator (QMLE) is defined by:
$$
\widetilde \theta_n:= \mbox{Argmax}_{\theta \in \Theta} \log \big (QL_\theta(X_1,\cdots,X_n)\big ).
$$
Usually, the "instrumental" probability density $h$ is the Gaussian density, {\it i.e.} 
$$
h(x)=\frac 1 {\sqrt{2 \pi }}\, e^{-\frac 1 2 \, x^2} \quad\mbox{for $x \in \R$}
$$
and this provides the Gaussian-QMLE of $\theta$. \\
Here, we chose as instrumental probability density the Laplacian density, {\it i.e.},
\begin{equation}\label{densL}
h(x)= \frac{1}{2}e^{-|x|}\qquad \mbox{for $x \in \R$},
\end{equation}
and this implies  $\E\big [|\zeta_0| \big ]=1$.\\
Therefore, we respectively define the Laplacian-likelihood and Laplacian-quasi-likelihood   by:
\begin{eqnarray}\label{eq::ss}
L_n(\Theta)&=&-\sum_{t=1}^n  q_t(\Theta) \qquad \mbox{with}\qquad q_t(\Theta)= \log{|M_\theta^t|}+|M_\theta^t|^{-1}  |X_t-f^t_{\theta}| \\
\label{eq::QMLE}
\widehat L_n(\theta)&=&-\sum_{t=1}^n \widehat q_t(\theta) \qquad \mbox{with}\qquad \widehat q_t(\theta):=\log{|\widehat{M}_\theta^t|}+|\widehat{M}_\theta^t|^{-1}  |X_t-\widehat{f}^t_{\theta}|.
\end{eqnarray}
Hence, if it exists, a Laplacian-QMLE $\widehat{\theta}_n$ is a maximizer of $\widehat{L}_n$:
\begin{equation*}
\widehat{\theta}_n:= \arg\max_{\theta \in \Theta}  \widehat{L}_n(\theta).
\end{equation*}
We restrict the set $\Theta$ in such a way that a stationary solution $(X_t)$ of order $1$ or $2$ of \eqref{eq::sys} exists. Additional conditions are also required for insuring the consistency and the asymptotic normality of $\widehat{\theta}_n$. More details are given now.

\subsection{Existence and stationarity}
As it was already done in \cite{DW} and \cite{BW}, several Lipschitz-type inequalities on $f_\theta$ and $M_\theta$ are required for obtaining the existence and $r$-order stationary ergodic causal solution of \eqref{eq::sys}. \\
First, denote $\|g_\theta \|_\Theta=\sup_{\theta \in \Theta} \|g_\theta \|$ with $m\in \N^*$ and $\|\cdot \|$ the usual Euclidean norm (for vectors or matrix). Now, let us introduce the generic symbol $K$ for any of the functions
$f$ or $M$. For $k=0,\, 1,\, 2$ and some  subset $\Theta$ of $\R^d$, define a Lipschitz assumption on function $K_\theta$:\\
\vspace{0.2cm}
\newline {\bf Assumption (A$_k$($K,\Theta$))} $\forall x\in\R^{\infty}$, $\theta \in \Theta \mapsto K_\theta(x) \in {\cal C}^k(\Theta)$ and
$\partial_\theta^k K_\theta$ satisfies $\big \| \partial_\theta^k K_\theta(0) \big \|_\Theta <\infty$ and there exists a sequence
$\big(\alpha^{(k)}_j(K,\Theta)\big)_{j}$ of nonnegative numbers such that $\forall x$, $y\in\R^{\N}$
\begin{eqnarray*}\big \|\partial_\theta^k K_\theta(x) -\partial_\theta^k K_\theta(y) \big \|_\Theta \le \displaystyle  \sum_{j=1}^\infty
\alpha^{(k)}_j(K,\Theta)
|x_j-y_j|,~\mbox{with}~\sum_{j=1}^\infty \alpha^{(k)}_j(K,\Theta)<\infty
\end{eqnarray*}
~\\
For ensuring a stationary $r$-order solution of \eqref{eq::sys}, for $r\ge1$, define the set
\begin{multline*}
\Theta(r):=\Big\{\theta\in\R^d,~ \mbox{{\bf(A$_0$($f,\{\theta\}$))} and {\bf(A$_0$($M,\{\theta\}$))} hold}, \\
\sum_{j=1}^\infty \alpha_j^{(0)}(f,\{\theta\})+\left(\E[|\zeta_0|^r ]\right)^{1/r}\sum_{j=1}^\infty \alpha_j^{(0)}(M,\{ \theta\})<1
\Big\}.
\end{multline*}
Then, from \cite{DW}, we obtain:
\begin{proposition}\label{stationarity}
If $\theta_0 \in \Theta(r)$ for some $ r\geq 1$, then there exists a unique
causal ($X_t$ is independent of $(\zeta_i)_{i  > t}$  for $t \in \Z$) solution $X$ of (\ref{eq::sys}), which is stationary, ergodic and satisfies $\E \big [|X_0|^r \big ] < \infty$ .
\end{proposition}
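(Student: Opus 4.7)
The plan is to construct $X$ as the unique fixed point of a contraction on a suitable $L^r$ space of causal stationary processes. Let $\mathcal{F}_t := \sigma(\zeta_s : s \le t)$ and let $\mathcal{H}^r$ denote the space of real-valued processes $Y = (Y_t)_{t \in \Z}$ such that (i) $Y_t$ is $\mathcal{F}_t$-measurable for every $t$, (ii) $(Y_t)$ is strictly stationary, and (iii) $\|Y\|_r := (\E|Y_0|^r)^{1/r} < \infty$. Equipped with the metric induced by $\|\cdot\|_r$, this is a complete metric space, since it is a closed subspace of the stationary $L^r$ processes. Define
\[
\Phi(Y)_t := M_{\theta_0}(Y_{t-1}, Y_{t-2}, \ldots)\, \zeta_t + f_{\theta_0}(Y_{t-1}, Y_{t-2}, \ldots), \qquad t \in \Z.
\]
A causal stationary $L^r$ solution of \eqref{eq::sys} is then precisely a fixed point of $\Phi$ in $\mathcal{H}^r$, so existence and uniqueness will follow from Banach's theorem.

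First I would verify that $\Phi$ maps $\mathcal{H}^r$ into itself. Applying \textbf{(A$_0$($f,\{\theta_0\}$))} and \textbf{(A$_0$($M,\{\theta_0\}$))} with reference point $0 \in \R^\N$ gives $|K_{\theta_0}(y)| \le |K_{\theta_0}(0)| + \sum_{j \ge 1} \alpha_j^{(0)}(K,\{\theta_0\}) |y_j|$ for $K \in \{f,M\}$. Combining this with Minkowski's inequality, the independence of $\zeta_t$ from $\mathcal{F}_{t-1}$, and the stationarity of $Y$ yields
\[
\|\Phi(Y)_0\|_r \le |f_{\theta_0}(0)| + |M_{\theta_0}(0)|\, \|\zeta_0\|_r + \Big(\sum_{j \ge 1} \alpha_j^{(0)}(f,\{\theta_0\}) + \|\zeta_0\|_r \sum_{j \ge 1} \alpha_j^{(0)}(M,\{\theta_0\})\Big) \|Y\|_r < \infty.
\]
Strict stationarity and $\mathcal{F}_t$-measurability of $\Phi(Y)$ are inherited from those of $Y$ together with the i.i.d.\ property of $(\zeta_t)$.

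The crucial step is the contraction estimate. For $Y, Y' \in \mathcal{H}^r$, the Lipschitz bounds give
\[
|\Phi(Y)_t - \Phi(Y')_t| \le \sum_{j \ge 1} \alpha_j^{(0)}(f,\{\theta_0\}) |Y_{t-j} - Y'_{t-j}| + |\zeta_t| \sum_{j \ge 1} \alpha_j^{(0)}(M,\{\theta_0\}) |Y_{t-j} - Y'_{t-j}|.
\]
Taking $L^r$ norms, using Minkowski, the independence of $\zeta_t$ from $Y_{t-j} - Y'_{t-j}$ for $j \ge 1$ (by causality), and the joint stationarity of $(Y_t, Y'_t)$, I would obtain $\|\Phi(Y) - \Phi(Y')\|_r \le C\, \|Y - Y'\|_r$ with
\[
C := \sum_{j \ge 1} \alpha_j^{(0)}(f,\{\theta_0\}) + (\E|\zeta_0|^r)^{1/r} \sum_{j \ge 1} \alpha_j^{(0)}(M,\{\theta_0\}) < 1,
\]
the strict inequality being exactly the defining property of $\Theta(r)$. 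Banach's fixed point theorem then supplies a unique $X \in \mathcal{H}^r$ with $\Phi(X) = X$, yielding both existence and uniqueness. Ergodicity then comes for free: iterating $\Phi$ from the constant process $0$ shows that $X_t$ is the $L^r$-limit of measurable functions of $(\zeta_s)_{s \le t}$, so $(X_t)$ is a Bernoulli shift of the i.i.d.\ sequence $(\zeta_t)$ and therefore ergodic.

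The main obstacle I anticipate lies in the contraction step: one must ensure that multiplication by $\zeta_t$ really does produce the factor $(\E|\zeta_0|^r)^{1/r}$ (which needs the causality of the candidate solutions to isolate $\zeta_t$ from the past), and one must argue that successive iterates $\Phi^n(0)$ remain strictly stationary and in $\mathcal{H}^r$, not merely in some larger space of $L^r$ random variables. Once these measurability and independence issues are carefully handled, the convergence follows routinely from $C < 1$ and the passage to the limit commutes with $\Phi$ by continuity in $L^r$.
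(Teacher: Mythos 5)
The paper gives no proof of Proposition \ref{stationarity} at all --- it is imported directly from Doukhan and Wintenberger (2007), and their argument is exactly the Picard/contraction scheme in $L^r$ under the condition $\sum_{j}\alpha_j^{(0)}(f,\{\theta_0\})+(\E[|\zeta_0|^r])^{1/r}\sum_{j}\alpha_j^{(0)}(M,\{\theta_0\})<1$ that you reproduce, so your proposal is essentially the same proof as the one the paper relies on. The only point to tighten is that your space $\mathcal{H}^r$ (adaptedness plus \emph{individual} strict stationarity) does not guarantee that an arbitrary pair $(Y,Y')$ is \emph{jointly} stationary, so the step $\|Y_{t-j}-Y'_{t-j}\|_r=\|Y_0-Y'_0\|_r$ in the contraction estimate should be run either on the subspace of causal Bernoulli shifts $Y_t=\phi\big((\zeta_{t-s})_{s\ge 0}\big)$ (which is complete, contains all the iterates $\Phi^n(0)$, and on which any two elements are jointly stationary) or directly along the iterates, while uniqueness among all causal stationary $L^r$ solutions is still recovered from the uniform bound $\sup_t\|X_t-X'_t\|_r\le C\,\sup_t\|X_t-X'_t\|_r$ with $C<1$.
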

\noindent The following lemma insures that if a process $X$ satisfies Proposition \ref{stationarity}, a causal predictable ARMA process with $X$ as innovation also satisfies Proposition \ref{stationarity}. We first provide the classical following notion for  a sequence $(u_n)_{n\in \N}$ of real numbers:
\begin{eqnarray*}
\nonumber &\mbox{$(u_n)_{n\in  \N}$ is an exponentially decreasing sequence (EDS)}& \\  \nonumber &\Longleftrightarrow& \\ \label{expo} &\mbox{there exists $\rho \in [0,1[$ such as $u_n=\mathcal{O}(\rho^n)$ when $n\to \infty$}.&
\end{eqnarray*}
\begin{lemma}\label{cond2}  Let $X$ be a.s. a causal stationary
solution of \eqref{eq::sys} for $\theta_0 \in \R^d$. 
Let $\widetilde X$ be such as $\widetilde X_t=\Lambda_\beta(L) \ X_t$ for $t \in \Z$ with $\Lambda_{\beta_0}(L)=P^{-1}_{\beta_0}(L) \, Q_{\beta_0}(L)$ where $(P_{\beta_0},Q_{\beta_0})$ are the coprime polynomials of a causal invertible ARMA$(p,q)$ processes with a vector of parameters $\beta_0\in \R^{p+q}$. Denote $\Lambda_{\beta_0}^{-1}(x)=Q^{-1}_{\beta_0}(x) \, P_{\beta_0}(x)=1+\sum_{j=1}^\infty \psi_j(\beta_0) x^j$. Then $\widetilde X$ is a.s. a causal stationary solution of the equation
$$
\widetilde X_t =\widetilde M _{\widetilde \theta_0}\big ((\widetilde X_{t-i})_{i\geq 1}\big ) \, \zeta_t +\widetilde f _{\widetilde \theta_0}\big((\widetilde X_{t-i})_{i\geq 1})\big ) \quad \mbox{for $t\in \Z$},
$$
where $\widetilde f _{\theta_0}$ and $\widetilde M _{\theta_0}$ are given in \eqref{Mftilde} and $\widetilde \theta_0=(\theta_0,\beta_0)$. Moreover, for $i=0, \,1, \, 2$ and with $K=f$ or $M$ and $\widetilde K=\widetilde f$ or $\widetilde M$,
\begin{itemize}
\item if $\alpha_j^{(i)}(K,\{\theta_0\})=O(j^{-\beta})$ and $\beta>1$, then $\alpha_j^{(i)}(\widetilde K,\{\widetilde \theta_0\})=O(j^{-\beta})$;
\item if $\alpha_j^{(i)}(K,\{\theta_0\})$ is EDS, then $\alpha_j^{(i)}(\widetilde K,\{\widetilde \theta_0\})$ is EDS.
\end{itemize}
\end{lemma}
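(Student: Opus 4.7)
The plan is to invert the ARMA filter explicitly to express $(X_t)$ as a linear functional of $(\widetilde X_t)$, substitute back into \eqref{eq::sys} to read off $\widetilde f$ and $\widetilde M$, and then propagate the Lipschitz-type bounds through the resulting (linear) convolution. Since $(P_{\beta_0}, Q_{\beta_0})$ defines a causal invertible ARMA, the sequence $(\psi_j(\beta_0))_{j\geq 1}$ in $\Lambda^{-1}_{\beta_0}(x) = 1 + \sum_{j\geq 1} \psi_j(\beta_0) x^j$ is EDS by classical rational-filter theory, and by smooth dependence of the roots on the coefficients (in the open region of causal invertibility) the derivatives $\partial_\beta^\ell \psi_j(\beta_0)$ for $\ell = 1, 2$ are EDS as well. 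Causality and stationarity of $\widetilde X$ are immediate, because $\widetilde X_t = \Lambda_{\beta_0}(L) X_t$ is a bounded causal linear filter of the stationary ergodic process $(X_t)$.

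From $X_{t-i} = \widetilde X_{t-i} + \sum_{j\geq 1} \psi_j(\beta_0) \widetilde X_{t-i-j}$ for every $i \geq 0$ and from \eqref{eq::sys}, isolating $\widetilde X_t = X_t - \sum_{j\geq 1} \psi_j(\beta_0) \widetilde X_{t-j}$ yields the stated equation, with $\widetilde M_{\widetilde \theta_0}(y) = M_{\theta_0}(\Phi_{\beta_0}(y))$ and $\widetilde f_{\widetilde \theta_0}(y) = f_{\theta_0}(\Phi_{\beta_0}(y)) - \sum_{j\geq 1} \psi_j(\beta_0) y_j$, where $\Phi_{\beta_0}(y)_i := y_i + \sum_{j\geq 1} \psi_j(\beta_0) y_{i+j}$. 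Setting $\psi_0(\beta_0) := 1$, linearity of $\Phi_{\beta_0}$ delivers the convolution bound
\begin{equation*}
\alpha_k^{(0)}(\widetilde M, \{\widetilde \theta_0\}) \leq \sum_{j=0}^{k-1} |\psi_j(\beta_0)|\, \alpha_{k-j}^{(0)}(M, \{\theta_0\}),
\end{equation*}
and an analogous inequality for $\widetilde f$ with an additional $|\psi_k(\beta_0)|$ term coming from the explicit subtraction. The two bullet points then follow from two short convolution lemmas I would establish directly: if $(a_k)$ and $(p_k)$ are both EDS then $(a \ast p)_k$ is EDS (bound $\sum_j \rho_1^j \rho_2^{k-j}$ by $C k \max(\rho_1,\rho_2)^k$); if $a_k = O(k^{-\beta})$ with $\beta > 1$ and $(p_k)$ is EDS then $(a \ast p)_k = O(k^{-\beta})$ (split the sum at $k/2$: the small-index part is bounded by $C k^{-\beta}\sum p_j$ and the large-index part decays exponentially).

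For $i = 1, 2$, I would split $\widetilde \theta = (\theta, \beta)$ and apply the chain rule. Derivatives in $\theta$ commute with $\Phi_{\beta_0}$ and reproduce the same convolution structure with $\alpha_j^{(i)}(K, \{\theta_0\})$ in place of $\alpha_j^{(0)}(K, \{\theta_0\})$; derivatives in $\beta$ act only on the $\psi_j$'s and produce further convolutions involving $\partial_\beta^\ell \psi_j$, which are themselves EDS. The main obstacle is the combinatorial bookkeeping for $i = 2$, where mixed $\theta$-$\beta$ derivatives generate several cross-terms; but each term has the form $a \ast b$ with one factor either EDS or $O(j^{-\beta})$ and the other factor EDS, so the two convolution lemmas above cover every case uniformly and deliver the claimed decay rates for $\alpha_j^{(i)}(\widetilde K, \{\widetilde \theta_0\})$.
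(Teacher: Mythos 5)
Your proposal is correct and follows essentially the same route as the paper: invert the ARMA filter to obtain the representation \eqref{Mftilde}, bound the new Lipschitz coefficients by the convolution of $(\alpha_j^{(i)}(K,\{\theta_0\}))_j$ with the EDS sequence $(|\psi_j(\beta_0)|)_j$ (plus the extra $|\psi_j|$ term for $\widetilde f$), and conclude with the two standard convolution estimates, noting that $\partial_\beta^\ell \psi_j$ remains EDS for the derivative cases. Your split-at-$k/2$ argument for the Riemannian case and the explicit chain-rule bookkeeping for $i=1,2$ are, if anything, slightly more detailed than the paper's treatment, which simply asserts the asymptotic equivalence and that ``the same kinds of computations'' handle the derivatives.
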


\subsection{Assumptions required for the convergence of the Laplacian-QMLE}
The Laplacian-QMLE could converge and be asymptotically Gaussian but this requires
some additional assumptions on $\Theta$ and functions $f_\theta$ and $M_\theta$:
\begin{itemize}
    \item {\bf Condition C1} (Compactness) $\Theta$ is a compact set.
  \item {\bf Condition C2} (Lower bound of the conditional variance) There exists a deterministic constant $\underline{M} > 0$ such that for all $\theta \in\Theta$ and $x \in \R^\N$, then  $M_\theta(x)>\underline{M}$.
  \item {\bf Condition C3} (Identifiability) The functions $M_\theta$ and $f_\theta$ are such that:
 for all $\theta_1,~\theta_2 \in \Theta$, then $M_{\theta_{1}}=M_{\theta_2}$ and $f_{\theta_{1}}=f_{\theta_2}$ implies that $\theta_1=\theta_2$.
\end{itemize}

\section{Consistency and asymptotic normality of the estimator}\label{Main}

\subsection{Consistency and asymptotic normality}
First we prove the strong consistency of a sequence of Laplacian-QMLE for a solution of (\ref{eq::sys}). The proof of this theorem, is postponed in Section \ref{Proofs}, as well as the other proofs.
\begin{theorem}\label{SC}
Assume Conditions $C1$, $C2$ and $C3$ hold and $\theta_0 \in \Theta(r)\cap \Theta$ with $r\geq 1$. Let $X$ be the stationary solution of (\ref{eq::sys}). If ${ (A_0(f,\Theta))}$ and ${ (A_0(M,\Theta))}$ hold with
\begin{equation} \label{decSC}
\alpha_j^{(0)}(f,\Theta)+\alpha_j^{(0)}(M,\Theta)=\mathcal{O}(j^{-\ell})\ \  \mbox{for some}~   \ell > \frac 2 {\min(r\, , \, 2)}
\end{equation}
then a sequence of Laplacian-QMLE  $(\widehat{\theta}_n)_n$ strongly converges,  that is $\widehat{\theta}_n   \limitepsn \theta_0$.
\end{theorem}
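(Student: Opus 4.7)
The plan is to follow the classical three-step strategy for $M$-estimator consistency, adapted to the Laplacian contrast.

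\textbf{Step 1 (reduction to the infinite-history contrast).} I would first prove $\frac{1}{n}\sup_{\theta \in \Theta}\bigl|\widehat L_n(\theta) - L_n(\theta)\bigr| \to 0$ almost surely. Using $C2$ to linearize $|\log \widehat M_\theta^t - \log M_\theta^t|$ and $|(\widehat M_\theta^t)^{-1} - (M_\theta^t)^{-1}|$, together with the Lipschitz assumptions $(A_0(f,\Theta))$ and $(A_0(M,\Theta))$, the increment $\sup_\theta|\widehat q_t(\theta) - q_t(\theta)|$ is dominated by a factor $(1 + |X_t| + \sup_\theta |f_\theta^t|)$ times the truncation tail $\sum_{j\geq t}\bigl(\alpha_j^{(0)}(f,\Theta) + \alpha_j^{(0)}(M,\Theta)\bigr)(|X_{t-j}| + |u_{t-j}|)$. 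Using $\E|X_0|^r < \infty$ from Proposition~\ref{stationarity} and the decay $\alpha_j^{(0)} = O(j^{-\ell})$ with $\ell > 2/\min(r,2)$, a H\"older argument (with exponent $r$ when $r<2$ and $2$ when $r\ge 2$) gives $\E\sup_\theta|\widehat q_t - q_t|^{\min(r,2)/2} = O(t^{-1-\varepsilon})$ for some $\varepsilon>0$; a Borel--Cantelli plus Ces\`aro argument then yields the claim.

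\textbf{Step 2 (uniform ergodic limit).} Proposition~\ref{stationarity} gives stationarity and ergodicity of $(q_t(\theta))_{t\in \Z}$ for each $\theta$. By compactness of $\Theta$ ($C1$), continuity of $\theta \mapsto q_0(\theta)$, and an integrable envelope $\E\sup_{\theta \in \Theta}|q_0(\theta)| < \infty$ (obtained from $C2$ for the $\log M_\theta^0$ term and from $\E|X_0|<\infty$ combined with $(A_0(f,\Theta))$ for the absolute-deviation term), a uniform ergodic theorem produces $\sup_{\theta \in \Theta}|n^{-1}L_n(\theta) + \E q_0(\theta)| \to 0$ a.s.

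\textbf{Step 3 (identifiability of the limit).} This is the delicate step. Conditioning on $\mathcal F_{t-1} = \sigma(X_s,\, s < t)$ and using $X_t = M_{\theta_0}^t \zeta_t + f_{\theta_0}^t$ with $\zeta_t$ independent of $\mathcal F_{t-1}$, one gets
$$\E[q_0(\theta)\mid \mathcal F_{-1}] \,=\, \log M_\theta^0 + \frac{M_{\theta_0}^0}{M_\theta^0}\, g\!\left(\frac{f_{\theta_0}^0 - f_\theta^0}{M_{\theta_0}^0}\right), \qquad g(v) := \E|\zeta_0 + v|.$$
Symmetry of $\zeta_0$ with $\E|\zeta_0|=1$ makes $g$ convex, even, minimized at $v=0$ with $g(0)=1$; combined with the elementary bound $u - 1 - \log u \ge 0$ (equality iff $u = 1$) applied to $u = M_{\theta_0}^0/M_\theta^0$, this yields $\E q_0(\theta) - \E q_0(\theta_0) \ge 0$ with equality iff $M_\theta^0 = M_{\theta_0}^0$ and $f_\theta^0 = f_{\theta_0}^0$ almost surely (the latter requiring that $0$ be the unique median of $\zeta_0$). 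Condition $C3$ then forces $\theta = \theta_0$, and a standard argmax continuity argument on the compact $\Theta$ concludes.

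The main obstacle is Step~1: the moment hypothesis $r \ge 1$ is so weak that Cauchy--Schwarz is unavailable for the product $|\widehat M^t - M^t|_\Theta \cdot |X_t|$, and the exponent $2/\min(r,2)$ appearing in \eqref{decSC} is precisely what is needed to balance the truncation tail $\sum_{j\ge t}\alpha_j^{(0)} = O(t^{1-\ell})$ against the available $L^r$ control of $X_0$. A secondary care point in Step~3 is that identifiability of the location through the $L^1$ criterion implicitly requires $0$ to be the unique median of $\zeta_0$, a mild by-product of the symmetry assumption when the distribution of $\zeta_0$ has no flat part around the origin.
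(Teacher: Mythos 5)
Your overall architecture --- approximate $\widehat L_n$ by the infinite-history contrast $L_n$, apply a uniform ergodic theorem with an integrable envelope, and identify the limit via $\E|\zeta_0+v|>\E|\zeta_0|=1$ for $v\neq 0$ together with $u-1-\log u\ge 0$ --- is exactly the paper's (which follows Jeantheau's scheme), and your Steps 2 and 3 are sound; your remark that the location part of the identifiability needs $0$ to be the unique median of $\zeta_0$ is a point the paper leaves implicit. The genuine gap is in Step 1. Under \eqref{decSC}, Cauchy--Schwarz and Lemma \ref{lem::estvar} only give
$$
\E\big[\|\widehat q_t(\theta)-q_t(\theta)\|_\Theta^{\,s}\big]\;=\;O\Big(\big(\textstyle\sum_{j\ge t}\alpha_j^{(0)}\big)^{s}\Big)\;=\;O\big(t^{-(\ell-1)s}\big),\qquad s=\tfrac{\min(r,2)}{2},
$$
and $(\ell-1)s>1$ is \emph{not} implied by $\ell>2/\min(r,2)$ (take $r=1$, $\ell=2.2$: the exponent is $0.6$). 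So your claimed rate $O(t^{-1-\varepsilon})$ is false in general, $\sum_t\P\big(\sup_\theta|\widehat q_t-q_t|>\delta\big)$ need not converge, and the termwise Borel--Cantelli step fails; as written, your argument only proves the theorem under the strictly stronger condition $\ell>1+2/\min(r,2)$.

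The paper closes this step with Corollary 1 of \cite{KW}, which delivers $n^{-1}\sum_{t\le n}\|\widehat q_t-q_t\|_\Theta\to 0$ a.s.\ from the \emph{weighted} summability $\sum_t t^{-s}\,\E[\|\widehat q_t-q_t\|_\Theta^{\,s}]<\infty$ rather than from summability of the moments themselves. With the bound above this series is $\sum_t t^{-\ell s}$, which converges precisely when $\ell>1/s=2/\min(r,2)$, i.e.\ exactly under \eqref{decSC}; this weighted criterion is what makes the stated exponent sharp. (Equivalently, since the summands are nonnegative and the partial sums monotone, one can run Borel--Cantelli along a subsequence $n_k$ with $n_{k+1}/n_k\to 1$ chosen so that $\sum_k n_k^{\,s(1-\ell)}<\infty$ --- which is essentially what the Kounias--Weng lemma packages.) Replace your termwise Borel--Cantelli by this criterion and Step 1 closes under the theorem's hypothesis; the rest of your proof then goes through.
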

\noindent Of course, the conditions required for this strong consistency of a sequence of Laplacian-QMLE are almost the same than the ones required for the strong consistency of a sequence of Gaussian-QMLE except that $r \in [1,2)$ is proved to be possible in Theorem \ref{SC} and not in case of Gaussian-QMLE (see \cite{BW}). Moreover, if $r=2$, the condition \eqref{decSC} on Lipshitzian coefficients is weaker for Laplacian-QMLE than for Gaussian-QMLE. As we will see below, many usual time series can satisfy the assumptions of Theorem \ref{SC}; for example, an AR$(\infty)$  process can be defined for satisfying the strong consistency of Laplacian-QMLE while the conditions given in \cite{BW} do not ensure the strong consistency of Gaussian-QMLE. \\
~\\
Now we state an extension of Theorem 1 established in \cite{DD} which will be an essential step of the proof of the asymptotic normality of the estimator.
\begin{theorem}\label{Davis}
Let $(Z_t)_{t\in \Z}$ be a sequence of i.i.d.r.v such as $\Var(Z_0)=\sigma^2<\infty$, with common distribution function which is symmetric ($F(-x)=1-F(x)$ for $x \in \R$) and is continuously differentiable in a neighborhood of $0$ with derivative $f(0)$ in $0$. Denote ${\cal F}_t=\sigma (Z_t,Z_{t-1},\cdots)$ for $t\in \Z$ and let $(Y_t)_{t\in \Z}$ and $(V_t)_{t\in \Z}$ two stationary processes adapted to  $({\cal F}_t)_t$ and such as $\E \big [ Y_0^2 V_0^2 \big ]<\infty$. Then
 \begin{equation}\label{dd}
\sum_{t=1}^n V_{t-1} \big ( | Z_t- n^{-1/2} Y_{t-1} | - |Z_t  | \big ) \limiteloin {\cal N} \Big ( f(0) \,  \E \big [ V_0 Y_0^2  \big ] \, , \, \E \big [ V_0^2 Y_0^2  \big ]\Big )
\end{equation}
\end{theorem}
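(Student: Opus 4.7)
The approach relies on Knight's classical identity
$$|x-y|-|x|=-y\,\mathrm{sgn}(x)+2\int_0^y\bigl(\mathbf{1}_{\{x\le s\}}-\mathbf{1}_{\{x\le 0\}}\bigr)ds,$$
valid for all real $x,y$, applied with $x=Z_t$ and $y=n^{-1/2}Y_{t-1}$. Multiplying by $V_{t-1}$ and summing, the left-hand side of \eqref{dd} decomposes as $S_n=M_n+R_n$ with
$$M_n=-n^{-1/2}\sum_{t=1}^n V_{t-1}Y_{t-1}\,\mathrm{sgn}(Z_t),\qquad R_n=2\sum_{t=1}^n V_{t-1}\int_0^{n^{-1/2}Y_{t-1}}\bigl(\mathbf{1}_{\{Z_t\le s\}}-\mathbf{1}_{\{Z_t\le 0\}}\bigr)ds.$$
By Slutsky's lemma it suffices to show that $M_n$ converges in distribution to $\mathcal{N}(0,E[V_0^2Y_0^2])$ and that $R_n\to f(0)E[V_0Y_0^2]$ in probability.

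\emph{Martingale part.} Since $V_{t-1}Y_{t-1}$ is $\mathcal{F}_{t-1}$-measurable while $Z_t$ is independent of $\mathcal{F}_{t-1}$ with symmetric law, the variables $U_t:=V_{t-1}Y_{t-1}\,\mathrm{sgn}(Z_t)$ form a stationary ergodic martingale difference sequence with $E[U_t^2\mid\mathcal{F}_{t-1}]=V_{t-1}^2Y_{t-1}^2$ and $E[U_0^2]=E[V_0^2Y_0^2]<\infty$. Birkhoff's ergodic theorem gives $n^{-1}\sum_{t=1}^n U_t^2\to E[V_0^2Y_0^2]$ almost surely, and the conditional Lindeberg condition follows from $|\mathrm{sgn}(Z_t)|\le 1$ and dominated convergence applied to $n^{-1}\sum V_{t-1}^2 Y_{t-1}^2\mathbf{1}_{\{|V_{t-1}Y_{t-1}|>\varepsilon\sqrt n\}}$. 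The Billingsley CLT for stationary ergodic martingale differences then yields the Gaussian limit for $M_n$.

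\emph{Remainder.} Set $G(y):=\int_0^y(F(s)-F(0))\,ds$; the local $\mathcal{C}^1$-regularity of $F$ with $F'(0)=f(0)$ yields $G(y)=\tfrac12 f(0)y^2+o(y^2)$ near $0$. Writing $R_n=\bar R_n+\tilde R_n$ with $\bar R_n=2\sum_{t=1}^n V_{t-1}G(n^{-1/2}Y_{t-1})$, a Taylor expansion combined with a preliminary truncation $|Y_{t-1}|\le K$ and the ergodic theorem applied to $n^{-1}\sum V_{t-1}Y_{t-1}^2$ gives $\bar R_n\to f(0)E[V_0Y_0^2]$ in probability, the cutoff being removed by letting $K\to\infty$ thanks to the integrability of $V_0Y_0^2$. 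For $\tilde R_n$, the summands involve the centred increments $\mathbf{1}_{\{Z_t\le s\}}-F(s)-\bigl(\mathbf{1}_{\{Z_t\le 0\}}-F(0)\bigr)$, which are conditionally centred given $\mathcal{F}_{t-1}$, so $\tilde R_n$ is a martingale sum; using the bound $|F(s)-F(0)|\le C|s|$ near $0$ and integrating up to $n^{-1/2}|Y_{t-1}|$ gives $E[\tilde R_n^2]=O(n^{-1/2})$, whence $\tilde R_n\to 0$ in probability.

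The main technical difficulty is the $L^2$ estimate on $\tilde R_n$: the upper limit of the inner integral is random and vanishes only at rate $n^{-1/2}$, so one must localize to a neighbourhood of $0$ via truncation in order to exploit the differentiability of $F$, and then show that the Taylor and $L^2$ estimates remain valid uniformly in $n$ when the truncation is eventually removed—this is precisely where the hypothesis $E[V_0^2Y_0^2]<\infty$ enters in a crucial way.
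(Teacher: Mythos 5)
Your proof is correct and follows essentially the same route as the paper: Knight's identity is just the integral form of the paper's explicit split $S_n=A_n+B_n$ into a sign--martingale term (handled by the Billingsley CLT for stationary ergodic martingale differences) and a remainder whose conditional expectation produces the drift $f(0)\,\E[V_0Y_0^2]$ while its centred part vanishes in $L^2$. The only difference is one of detail: the paper delegates the estimates on $B_n$ to Davis and Dunsmuir, whereas you spell out the compensator/martingale decomposition and correctly note that the $L^2$ bound on $\tilde R_n$ requires a truncation in $Y_{t-1}$ because only $\E[V_0^2Y_0^2]<\infty$ is assumed.
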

\noindent Then, the asymptotic normality of the Laplacian-QMLE can be established using additional assumptions:
\begin{theorem}\label{AN}
Assume that $\theta_0\in \stackrel{\circ}{ \Theta} \cap \Theta(r)$ where $r \geq 2$ and
$\stackrel{\circ}{ \Theta}$ denotes the interior of $\Theta$. Let $X$ be the stationary solution of
the equation \eqref{eq::sys}. Assume that the conditions of
Theorem \ref{SC} hold and for $i=1,2$, assume ${ (A_i(f,\Theta))}$ and ${ (A_i(M,\Theta))}$ hold.
Then, if the cumulative probability function of $\zeta_0$ is continuously differentiable in a neighborhood of $0$ with derivative $g(0)$ in $0$ and if matrix $\Gamma_F$ or $\Gamma_M$, defined in \eqref{Gamma},  are definite positive symmetric matrix, then
\begin{eqnarray}\label{tlcqmle}
\sqrt{n}\big (\widehat\theta_n-\theta_0\big ) \limiteloin {\cal
N}_d\Big (0 \ , \ \big (  \Gamma_M +2g(0)\, \Gamma_F \big )^{-1} \big ( \big ( \sigma^2_\zeta -1 \big ) \, \Gamma_M + \Gamma_F \big )
\big (  \Gamma_M +2g(0)\, \Gamma_F \big )^{-1}\Big ).
\end{eqnarray}
\end{theorem}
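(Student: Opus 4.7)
The plan is to follow the standard argmax scheme for M-estimators with non-differentiable contrasts (cf.\ \cite{DD}): after reparametrizing $\theta = \theta_0 + u/\sqrt n$ and using consistency (Theorem~\ref{SC}), establish a pointwise-in-$u$ quadratic expansion of $\widehat L_n(\theta_0 + u/\sqrt n) - \widehat L_n(\theta_0)$ whose limit is $-u^\top W - \tfrac{1}{2}u^\top H u$ for some deterministic positive-definite $H$ and centered Gaussian $W$, then identify $U_n := \sqrt n(\widehat\theta_n - \theta_0)$ with the unique maximizer $-H^{-1}W$. The difficulty is that $|\cdot|$ is not differentiable at $0$, which rules out a direct Taylor expansion of the contrast; this is precisely what Theorem~\ref{Davis} is designed to resolve.

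First I would remove the initialization: under $(A_i(f,\Theta))$ and $(A_i(M,\Theta))$ for $i=0,1,2$ combined with~\eqref{decSC}, bounds analogous to those used for Theorem~\ref{SC} and in~\cite{BW} yield $\sup_{|u|\le R}|\widehat L_n(\theta_0+u/\sqrt n)-L_n(\theta_0+u/\sqrt n)|=o_P(1)$, so one can work with the infinite-memory $L_n$. Setting $\theta_n:=\theta_0+u/\sqrt n$, $\mu_t:=\partial_\theta M_{\theta_0}^t/M_{\theta_0}^t$, $\phi_t:=\partial_\theta f_{\theta_0}^t/M_{\theta_0}^t$ (both $\mathcal{F}_{t-1}$-measurable), and $c_t^n:=(f_{\theta_n}^t-f_{\theta_0}^t)/M_{\theta_0}^t = n^{-1/2}u^\top\phi_t+O(n^{-1})$, I would use $X_t-f_{\theta_0}^t = M_{\theta_0}^t\zeta_t$ to decompose
\begin{equation*}
q_t(\theta_n)-q_t(\theta_0) = \bigl[\log M_{\theta_n}^t-\log M_{\theta_0}^t\bigr] + \Bigl[\tfrac{M_{\theta_0}^t}{M_{\theta_n}^t}-1\Bigr]|\zeta_t| + \bigl[|\zeta_t-c_t^n|-|\zeta_t|\bigr] + \rho_t^n,
\end{equation*}
where the cross term $\rho_t^n = (M_{\theta_0}^t/M_{\theta_n}^t-1)(|\zeta_t-c_t^n|-|\zeta_t|)$ satisfies $\sum_t\rho_t^n = o_P(1)$, its leading piece $n^{-1}\sum_t(u^\top\mu_t)(u^\top\phi_t)\mathrm{sign}(\zeta_t)$ having vanishing ergodic mean by independence of $\zeta_t$ from $\mathcal{F}_{t-1}$ and symmetry.

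The first two terms are $\mathcal{C}^2$ in $\theta_n$: a two-term Taylor expansion of $\log M_{\theta_n}^t$ and of $M_{\theta_0}^t/M_{\theta_n}^t$, summation, use of $E[|\zeta_0|]=1$ to cancel the mean of the coefficient of $\nu_t := \partial^2 M_{\theta_0}^t/M_{\theta_0}^t$ in the quadratic part, and the ergodic theorem yielding $n^{-1}\sum_t (|\zeta_t|-\tfrac{1}{2})\mu_t\mu_t^\top \to \tfrac{1}{2}\Gamma_M$, give
\begin{equation*}
\sum_{t=1}^n\Bigl(\bigl[\log M_{\theta_n}^t-\log M_{\theta_0}^t\bigr] + \bigl[\tfrac{M_{\theta_0}^t}{M_{\theta_n}^t}-1\bigr]|\zeta_t|\Bigr) = \tfrac{1}{\sqrt n}u^\top\sum_{t=1}^n\mu_t(1-|\zeta_t|) + \tfrac{1}{2}u^\top\Gamma_M u + o_P(1).
\end{equation*}
The third term is the genuinely non-smooth contribution. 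Theorem~\ref{Davis}, applied with $Z_t=\zeta_t$, $Y_{t-1}=u^\top\phi_t$, $V_{t-1}\equiv 1$, together with the standard decomposition of its sum into the predictable compensator $\sum_t E[|\zeta_t-c_t^n|-|\zeta_t|\mid\mathcal{F}_{t-1}] = g(0) n^{-1}\sum_t(u^\top\phi_t)^2+o_P(1) \to g(0)u^\top\Gamma_F u$ and a martingale part of leading order $-n^{-1/2}u^\top\sum_t\phi_t\mathrm{sign}(\zeta_t)$, gives $\sum_t[|\zeta_t-c_t^n|-|\zeta_t|] = -n^{-1/2}u^\top\sum_t\phi_t\mathrm{sign}(\zeta_t) + g(0)u^\top\Gamma_F u + o_P(1)$.

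Assembling,
\begin{equation*}
\widehat L_n(\theta_n) - \widehat L_n(\theta_0) = -u^\top W_n - \tfrac{1}{2}u^\top\bigl(\Gamma_M+2g(0)\Gamma_F\bigr)u + o_P(1), \qquad W_n := \tfrac{1}{\sqrt n}\sum_{t=1}^n\bigl[\mu_t(1-|\zeta_t|) - \mathrm{sign}(\zeta_t)\phi_t\bigr].
\end{equation*}
The summands of $W_n$ are $\mathcal{F}_t$-martingale differences (using $E[|\zeta_0|]=1$ and symmetry of $\zeta_0$), and the cross-covariance between $\mu_t(1-|\zeta_t|)$ and $\mathrm{sign}(\zeta_t)\phi_t$ vanishes by symmetry, giving $E[W_n W_n^\top]\to(\sigma_\zeta^2-1)\Gamma_M+\Gamma_F$ and, via the martingale CLT, $W_n \to W \sim \mathcal{N}_d(0,(\sigma_\zeta^2-1)\Gamma_M+\Gamma_F)$ in distribution. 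Uniform tightness in $u$ on compacta (from the Lipschitz and moment assumptions), positive-definiteness of the limit Hessian $\Gamma_M+2g(0)\Gamma_F$, and consistency of $\widehat\theta_n$ together with an argmax theorem then identify $U_n = \sqrt n(\widehat\theta_n-\theta_0) \to -(\Gamma_M+2g(0)\Gamma_F)^{-1}W$ in distribution, whose covariance matches~\eqref{tlcqmle}. The principal difficulty is localized in the non-smooth term: extracting from $\sum_t\bigl[|\zeta_t-c_t^n|-|\zeta_t|\bigr]$ both a quadratic-in-$u$ curvature (contributing $2g(0)\Gamma_F$ to the Hessian) and a properly centered $\mathrm{sign}(\zeta_t)\phi_t$ score is exactly what Theorem~\ref{Davis} supplies; the remaining work is Taylor-expansion and moment bookkeeping in the spirit of~\cite{BW}.
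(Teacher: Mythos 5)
Your proposal is correct and follows essentially the same route as the paper's proof: reparametrize by $u=\sqrt n(\theta-\theta_0)$, split the contrast increment into the smooth scale part (handled by a two-term Taylor expansion plus the ergodic/martingale LLN and CLT) and the non-smooth location part (handled by Theorem \ref{Davis} applied with $Z_t=\zeta_t$, $Y_{t-1}=u^\top\phi_t$, $V_{t-1}=1$), kill the cross term and the cross-covariance by symmetry and independence, and conclude by an argmax argument with Hessian $\Gamma_M+2g(0)\Gamma_F$ and score covariance $(\sigma_\zeta^2-1)\Gamma_M+\Gamma_F$. The only cosmetic difference is bookkeeping: you isolate the cross term $\rho_t^n$ explicitly and remove the initialization up front, whereas the paper folds the same remainder into its $I_2^{(2)}$ and treats the truncation error at the end.
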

\noindent As it was already proved for the median estimator (see \cite{V}) or for least absolute deviations estimator of ARMA process (see \cite{DD}), it is not surprising that the probability density function $g$ of the white noise $(\zeta_i)_i$ impacts the asymptotic covariance of \eqref{tlcqmle}. However, when $f_\theta=0$, this is not such the case and this is what happens for GARCH processes see \cite{FLZ}  where the probability density $g$ does not appear in the asymptotic covariance.
\subsection{Comments on these limit theorems}
Essentially, these limit theorems could appear close or even very close to the results of $3$ other references we chronologically list below but also from which we highlight the differences:
\begin{itemize}
\item The first related paper is \cite{DD} which is cited many times. The framework of this paper is restricted to the LAD (similar to the Laplacian-QMLE) of the parameters of ARMA$[p,q]$ process or residuals of least-square estimation with ARMA$[p,q]$ errors. If the framework \eqref{eq::sys} is clearly more general since it includes for instance GARCH, ARMA-GARCH or APARCH process, the proof we used for establishing the asymptotic normality of the Laplacian estimator is clearly inspired by the one of \cite{DD}. Thus our results could appear as extensions of this paper. 
\item The second and certainly closest paper, \cite{BW}. The considered framework is exactly the same, {\it i.e.} general causal affine models and the estimation method is the same, {\it i.e.} the quasi-maximum likelihood estimation (QMLE). However in \cite{BW} the QMLE is based on an "instrumental" Gaussian density instead of a Laplacian one. As it is such the case for instance by comparing quantile with least square regression, this implies three main differences:
\begin{enumerate}
\item The moment conditions $r$ of both the limit theorems (strong consistency and asymptotic normality) are weaker with Laplacian QMLE than with the Gaussian one. Indeed, the absolute value of conditional log-density $q_t(\theta)$ is bounded by an affine function of $|X_t|$ in the Laplacian case while it is bounded by a quadratic polynomial of $X_t$ in the Gaussian case. As a consequence, $r=1$ (respectively $r=2$) could be required for the strong consistency (resp. asymptotic normality) of the Laplacian QMLE while $r=2$ (resp. $r=4$) is required for the Gaussian QMLE. This gain on moment condition can be crucial for instance in an econometric framework where the Kurtosis of data is sometimes infinite.    
\item The proof of Theorem \ref{SC} is simpler and sharper than the proof of strong consistency in \cite{BW}. Indeed, in our new proof, we use a condition of almost sure uniform consistency based on a general and powerful result established in \cite{KW} while a Feller-type condition was "only" used in \cite{BW}. This difference leads to a very sharp condition on the decreasing rate of the Lipshitzian coefficients $(\alpha^{(0)}_k)$ for Laplacian QMLE, $\ell>1$ in \eqref{decSC}, while $\ell >3/2$ is required for Gaussian QMLE.
\item The proof of Theorem \ref{AN} is totally different to the one for Gaussian QMLE since the conditional log-density is no more differentiable with respect to the parameters. A kind of proof similar to the one used for establishing the asymptotic normality of the median is required. Hence, in a first step we had to prove an extension of a central limit for adapted processes established in \cite{DD}, {\it i.e.} our Theorem \ref{Davis}, and  we used it in a second step for establishing the asymptotic normality of the Laplacian QMLE. Note also that the conditions on the derivatives of functions $f_\theta$ and $M_\theta$ are clearly weaker with Laplacian than with Gaussian QMLE.
\end{enumerate}
\item The third related paper is \cite{FLZ}. The framework of this paper is restricted to linear causal models ($X_t=\sigma_t(\theta) \, \xi_t$) in contrast with the affine causal models ($X_t=M_\theta^t \, \xi_t+f_\theta^t$) considered in \eqref{eq::sys}. Hence ARMA but also ARMA-GARCH or ARMA-APARCH processes are not considered in this framework. Moreover the required moment is $r=4$ (instead of $r=2$ in our conditions) and the condition on the approximation of $\sigma_t(\theta)$, {\it i.e.} $\sup_\theta |\sigma_t(\theta)-\hat \sigma_t(\theta)|\leq C_1 \, \rho^t$ is clearly weaker than our Lipshitzian condition (for instance $ARCH(\infty)$ processes with Riemanian decay of the coefficients could satisfy our conditions but not their conditions). In \cite{FLZ}, a large family of instrumental probability densities, {\it i.e.} generalized Gaussian densities, including Laplace density, but their proof of asymptotic normality mimics the proof using derivatives of Gaussian QMLE since the "shift" component $f_\theta^t$ typically present for ARMA processes, is not considered in their models. Note also that \cite{FZ2015} also studies non-Gaussian QMLE but their assumption A9 implies that the Laplace density is not considered in their asymptotic normality of the QMLE. 
\end{itemize}
Finally it appears that our results provide an original extension or counterpart of these three related references.
\subsection{Examples}
In this section, several examples of time series satisfying the conditions of previous results are considered. Like it could be boring to state the results for all sufficiently famous processes, we refer, {\it mutatis mutandis}, to \cite{BW} and Bardet  {\it et al.} (2012) for ARCH$(\infty)$ and TARCH$(\infty)$.\\
~\\
{\bf 1/ APARCH processes.} APARCH$(\delta,p,q)$ model has been introduced (see \cite{DGE}) as the solution of equations
\begin{equation}\label{aparch}
\begin{cases}
& X_t=\sigma_t \zeta_t,\\
&\sigma_t^\delta=\omega+\sum_{i=1}^p \alpha_i(|X_{t-i}|-\gamma_i X_{t-i})^\delta+\sum_{j=1}^q\beta_j\sigma_{t-j}^\delta,
\end{cases}
\end{equation}
where $\delta \geq  1$, $\omega>0$, $-1 <\gamma_i < 1$ and $\alpha_i\geq0$ for $i=1,\ldots,p$, $\beta_j\geq0$ for $j=1,\ldots,q$ with $\alpha_p, \beta_q$ strictly positive and $\sum_{j=1}^q\beta_j<1$. Hence, we denote here $\theta=\big (\delta, \omega,\alpha_1,\ldots,\alpha_p, \gamma_1,\ldots,\gamma_p,\beta_1,\ldots,\beta_q  \big)$. \\
Using $L$ the usual backward operator such as $LX_t=X_{t-1}$, $\big (1-\sum_{j=1}^q\beta_j L^j \big )^{-1}$ exists and simple computations imply for $t \in \Z$:
\begin{eqnarray*}
  \sigma_t^\delta &=& \big (1-\sum_{j=1}^q\beta_j L^j \big )^{-1} \Big[ \omega+\sum_{i=1}^p \alpha_i(1-\gamma_i )^\delta (\max(X_{t-i},0))^\delta + \alpha_i(1+\gamma_i )^\delta (-\min(X_{t-i},0))^\delta \Big]     \\
    &=&  b_0+\sum_{i\geq1} b_i^+ (\max(X_{t-i},0))^\delta +\sum_{i\geq1} b_i^- (\max(-X_{t-i},0))^\delta.
\end{eqnarray*}
where $b_0={w}{(1-\sum_{j=1}^q\beta_j )^{-1}}$ and the coefficients $(b_i^+, b_i^-)_{i\geq1}$ are defined by the recursion relations
\begin{equation}\label{bb}
\begin{cases}
b_i^+=\sum_{k=1}^q \beta_k b_{i-k}^+ + \alpha_i (1-\gamma_i)^\delta   \   \  \ \mbox{with $\ \alpha_i (1-\gamma_i)=0 $ for  \ $i>p$}\\
b_i^-=\sum_{k=1}^q \beta_k b_{i-k}^- + \alpha_i (1+\gamma_i)^\delta   \   \  \ \mbox{with $\ \alpha_i (1+\gamma_i)=0$  for  \ $i>p$}
\end{cases}
\end{equation}
with $b^+_i=b_i^-=0$ for $i \leq 0$. As a consequence, for APARCH model, $f_\theta^t\equiv0$ and $M_\theta^t=\sigma_t$. It is clear that $\alpha_j^{(0)}(f,\Theta)=0$ and simple computations imply $\alpha_j^{(0)}(M,\Theta)=\sup_{\theta\in \Theta}\max\big(|b_j^+(\theta)|^{1/\delta},|b_j^-(\theta)|^{1/\delta}\big)$. Therefore $A_0(f,\Theta)$ holds and $\sum_{j=1}^q\beta_j<1$ implies that a sequence defined by $u_n=\sum_{k=1}^q \beta_k u_{n-k}$ for $n$ large enough is such as $(u_n)_{n\in \N}$ is an exponentially decreasing sequence and therefore $A_0(M,\Theta)$ holds. Thus for $r\geq 1$, the stationarity set $\Theta(r)$ is defined by
\begin{equation}\label{thetaparch}
\Theta(r)=\Big \{\theta\in\R^{2p+q+2}~\Big/~ \big ( \E \big [ |\zeta_0|^r \big
] \big )^{1/r}\sum_{j=1}^\infty \max \big (|b_j^+|^{1/\delta},|b_j^-|^{1/\delta}\big )< 1\Big \}.
\end{equation}
Now the strong consistency and asymptotic normality of the Laplacian-QMLE for APARCH models can be established (see the proof in Section \ref{Proofs}):
\begin{proposition}\label{QMLAPARCH}
Assume that $X$ is a stationary solution of \eqref{aparch} with $\theta_0 \in \Theta$ where $\Theta$ is a compact subset of $\Theta(r)$ defined in \eqref{thetaparch}. Then,
\begin{enumerate}
  \item If $r=1$, then $\widehat{\theta}_n   \limitepsn \theta_0$.
  \item If $r=2$, and if $\Gamma_M$ defined in \eqref{Gamma} is a definite positive symmetric matrix, then
\begin{eqnarray*}
\sqrt{n}\big (\widehat\theta_n-\theta_0\big ) \limiteloin {\cal N}_{2p+q+2}\big (0 \ , \ ( \sigma^2_\zeta -1  ) \  \Gamma_M^{-1}  \big ).
\end{eqnarray*}
\end{enumerate}
\end{proposition}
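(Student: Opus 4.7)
The plan is to derive Proposition \ref{QMLAPARCH} as a direct application of Theorems \ref{SC} and \ref{AN}, after verifying that the APARCH parametrization meets each of the hypotheses of those theorems. Since $f_\theta^t\equiv 0$, all Lipschitzian conditions on $f$ are trivially satisfied with $\alpha^{(i)}_j(f,\Theta)=0$ for $i=0,1,2$, and the matrix $\Gamma_F$ appearing in \eqref{Gamma} vanishes; this is what collapses the asymptotic covariance in part 2 to $(\sigma_\zeta^2-1)\Gamma_M^{-1}$.

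First I would check the structural conditions \textbf{C1}--\textbf{C3}. \textbf{C1} is given since $\Theta$ is assumed compact. For \textbf{C2}, from the infinite-sum representation of $\sigma_t^\delta$ and the fact that all $b_i^\pm\geq 0$ with $b_0=\omega/(1-\sum_{j=1}^q\beta_j)>0$, one gets $\sigma_t^\delta\geq b_0$ uniformly, hence $M_\theta^t=\sigma_t\geq b_0^{1/\delta}\geq \underline M>0$ on the compact $\Theta$. The identifiability \textbf{C3} is the delicate point: if $M_{\theta_1}=M_{\theta_2}$ almost surely, then the two infinite AR-type expansions coincide, so all coefficients $b_0(\theta)$, $b_i^\pm(\theta)$ (and the power $\delta$, which can be read off from the homogeneity degree) must coincide; the recursion \eqref{bb} is then inverted in $\beta$, $\alpha$, $\gamma$ using the minimality coming from $\alpha_p, \beta_q>0$ and the co-primeness of the AR and MA polynomials, as is classical for GARCH-type identification (this is where the argument mirrors the ARMA-GARCH identifiability of \cite{FZ2004}).

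Next I would verify the Lipschitzian assumptions $A_i(M,\Theta)$ for $i=0,1,2$. For $i=0$, the formula $\alpha_j^{(0)}(M,\Theta)=\sup_{\theta\in\Theta}\max(|b_j^+|^{1/\delta},|b_j^-|^{1/\delta})$ is already noted in the paper, and the recursion \eqref{bb} together with $\sum \beta_j<1$ on the compact $\Theta$ gives the exponential decay of $(b_j^\pm)$, hence of $\alpha_j^{(0)}(M,\Theta)$. Consequently \eqref{decSC} holds for every $\ell>0$, so the assumption of Theorem \ref{SC} is met as soon as $r\geq 1$ and $\theta_0\in\Theta(1)$, yielding part 1. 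For $i=1,2$, one differentiates the recursion \eqref{bb} with respect to $\theta$: the derivatives $\partial_\theta^i b_j^\pm$ satisfy analogous linear recursions driven by $(b_j^\pm)$, so they remain exponentially decaying, and since $\sigma_t^\delta$ is bounded below the chain rule applied to $\sigma_t=(\sigma_t^\delta)^{1/\delta}$ preserves the exponential decay of the Lipschitz coefficients $\alpha_j^{(i)}(M,\Theta)$. The only care is with the parameter $\delta$ itself: writing $\sigma_t^\delta=\exp(\delta \log \sigma_t)$ and using compactness of $\Theta$ (so $\delta$ is bounded away from $0$) produces uniform bounds for the derivatives in $\delta$.

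With these checks in hand, part 1 follows directly from Theorem \ref{SC}. For part 2, when $r=2$ and $\theta_0$ is an interior point of $\Theta$, Theorem \ref{AN} applies and the asymptotic covariance is
\begin{equation*}
\bigl(\Gamma_M+2g(0)\Gamma_F\bigr)^{-1}\bigl((\sigma_\zeta^2-1)\Gamma_M+\Gamma_F\bigr)\bigl(\Gamma_M+2g(0)\Gamma_F\bigr)^{-1}
=(\sigma_\zeta^2-1)\Gamma_M^{-1},
\end{equation*}
because $f_\theta\equiv 0$ makes $\Gamma_F=0$. I expect the main obstacle to be the identifiability step \textbf{C3}: one must rule out pathological reparametrizations by genuinely using $\alpha_p,\beta_q>0$ and co-primeness, whereas the rest (lower bound $\underline M$, exponential decay of $\alpha_j^{(i)}(M,\Theta)$, interior differentiability in $\delta$) is essentially bookkeeping on the recursions \eqref{bb}.
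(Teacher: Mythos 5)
Your route is the same as the paper's: verify \textbf{C1}--\textbf{C3} and the Lipschitz assumptions $A_i(M,\Theta)$ via the recursion \eqref{bb}, invoke Theorems \ref{SC} and \ref{AN}, and note that $f_\theta\equiv 0$ forces $\Gamma_F=0$ so the sandwich covariance collapses to $(\sigma^2_\zeta-1)\,\Gamma_M^{-1}$. The checks of compactness, of the lower bound $\sigma_t^\delta\geq b_0>0$, of the exponential decay of $\alpha_j^{(i)}(M,\Theta)$, and the second half of the identifiability argument (recovering $\omega,\alpha_i,\gamma_i,\beta_j$ from $b_0$ and $(b_i^{\pm})$ by coprimality and degree counting) all match what the paper does.

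The gap is in the first half of the identifiability step, which is where the paper spends essentially all of its effort. What the consistency argument actually delivers is $M_{\theta}^t=M_{\theta_0}^t$ \emph{almost surely along the observed trajectory}, i.e.\ equality of $b_0+\sum_i b_i^+(\max(X_{t-i},0))^\delta+\sum_i b_i^-(\max(-X_{t-i},0))^\delta$ for the two parameter values at the random point $(X_{t-1},X_{t-2},\dots)$, not equality of the functions on all of $\R^{\N}$. Your sentence ``the two infinite AR-type expansions coincide, so all coefficients must coincide'' is precisely the assertion that needs proof: since $X$ visits only a null set of sequences, you cannot read the coefficients off by plugging in test sequences. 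The paper's argument (following Berkes--Horv\'ath--Kokoszka) takes the smallest index $m$ at which $(b_m^+,b_m^-)\neq(b_m^{+\prime},b_m^{-\prime})$, solves the resulting identity for $\zeta_{t-m}^\delta$ on the event $\{\zeta_{t-m}\geq 0\}$ or $\{\zeta_{t-m}<0\}$, and observes that the right-hand side is measurable with respect to the past $\sigma$-algebra generated by $(\zeta_i)_{i<t-m}$; independence then forces $\zeta_{t-m}$ to be a.s.\ constant on one of these events, contradicting non-degeneracy of the innovation. This use of the non-degeneracy of $\zeta_0$ is indispensable and is absent from your sketch. Likewise, $\delta=\delta'$ is obtained there from injectivity of $x\mapsto x^\delta$ on $(0,\infty)$ together with $\P(X_t=\pm1,\ \forall t)=0$, not from a ``homogeneity degree'' (the presence of $b_0$ and the restriction to the trajectory break any homogeneity argument). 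Everything else in your proposal is sound.
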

\noindent To our knowledge, this is the first statement the asymptotic properties of Laplacian-QMLE for APARCH processes.\\
~\\
{\bf 2/ ARMA-GARCH processes.}  ARMA$(p,q)$-GARCH$(p',q')$ processes have been
introduced by \cite{DGE} and \cite{LM} as the solution of the system of equations
\begin{equation}\label{armaga}
\begin{cases}
&P_\theta(L) \ X_t=Q_\theta(L) \ \varepsilon_t,\\
&\varepsilon_t=\sigma_t \zeta_t,\quad \mbox{with} \quad \sigma_t^2=c_0+\sum_{i=1}^{p'}c_i \varepsilon_{t-i}^2+\sum_{i=1}^{q'}d_i \sigma_{t-i}^2
\end{cases}
\end{equation}
where
\begin{itemize}
\item $c_0>0$, $c_i\geq0$ for $i=1,\ldots,p'$, $d_i\geq0$ for $i=1,\ldots,q'$, $\sum_{i=1}^{q'}d_i<1$ and $c_{p'}, d_{q'}$ positive;
\item  $P_\theta(x)=1-a_1x-\cdots-a_px^p$ and $Q_\theta(x)=1-b_1x-\cdots-b_{q}x^{q}$ are coprime polynomials with $\sum_{i=1}^p |a_i|<1$ and $\sum_{i=1}^p |b_i|<1$.
\end{itemize}
Let $\theta=(c_0, c_1,\ldots,c_{p'},d_1,\ldots,d_{q'},a_1,\ldots, a_p, b_1,\ldots,b_{q} )$.
We are going to use Lemma \ref{cond2}. Since $(\varepsilon_t)$ is supposed to be a GARCH$(p',q')$, then $f^\varepsilon_\theta=0$ and $M^\varepsilon_\theta=\big (\big (1-\sum_{j=1}^{q'}d _j L^j \big )^{-1}\big (c_0+c_1 \varepsilon^2_{t-1}+ \cdots + c_{p'} \varepsilon^2_{t-p'} \big )^{1/2}$ and direct computations imply that the Lipshitz coefficients of $(\varepsilon_t)$ are such as $\alpha_j^{(0)}(f^\varepsilon,\{\theta_0\} )=0$ and $\alpha_j^{(0)}(M^\varepsilon,\{\theta_0\} )=|\beta_j|$ with $\big (1+\sum_{j=1}^\infty \beta_j x^j\big )\big (1-\sum_{j=1}^{q'}d _j x^j \big )=\sum_{j=0}^{p'} c_j x^j$. Therefore $(\alpha_j^{(0)}(f^\varepsilon,\{\theta_0\} ))_j$ and $(\alpha_j^{(0)}(M^\varepsilon,\{\theta_0\} ))_j$ are EDS (see for instance \cite{BH}). Thus $(A_0(f^\varepsilon,\{\theta_0\}))$ and $(A_0(M^\varepsilon,\{\theta_0\}))$ hold. \\
Considering the ARMA part and denoting $(\psi_j)$ such as $\big ( 1 +\sum_{j=1}^\infty \psi_j x^j\big ) \big ( 1-\sum_{j=1}^\infty a_j x^j\big )=  \big ( 1-\sum_{j=1}^\infty b_j x^j\big )$, then  from Lemma \ref{cond2} we deduce that:
$$
\left \{ \begin{array}{lcl}
\alpha_j^{(0)}(f,\{\theta_0\} ) &=& |\psi_j | \\
\alpha_j^{(0)}(M,\{\theta_0\} ) &\leq & \sum_{k=1}^j |\psi_k |\times  |\beta_{j-k} |
\end{array} . \right .
$$
Then we deduce that $(\alpha_j^{(0)}(f,\{\theta_0\} ))_j$ and $(\alpha_j^{(0)}(M,\{\theta_0\} ))_j$ are EDS, $(A_0(f,\{\theta_0\}))$ and $(A_0(M,\{\theta_0\}))$ hold, and $X$ is a.s. a solution of (\ref{eq::sys}) for $\theta$ included in the $r$-order stationarity set $\Theta(r)$ defined by
\begin{equation}\label{thetaAG}
\Theta(r)=\Big \{\theta\in\R^{p+q+p'+q'+1}~\Big/~
\sum_{i=1}^\infty|\psi_i(\theta)|+\big ( \E \big [ | \zeta_0|^r \big
] \big )^{1/r}\sum_{j=1}^\infty \sum_{k=1}^j |\psi_k |\times  |\beta_{j-k} |< 1\Big \}.
\end{equation}
Now the strong consistency and asymptotic normality of the Laplacian-QMLE for ARMA-GARCH processes can be established:
\begin{proposition}\label{QMLArmagarch}
Assume that $X$ is a stationary solution of \eqref{armaga} with $\theta_0 \in \Theta$ where $\Theta$ is a compact subset of $\Theta(r)$ defined in \eqref{thetaAG}. Then,
\begin{enumerate}
  \item If $r=1$, then $\widehat{\theta}_n   \limitepsn \theta_0$.
  \item If $r=2$, and if $\Gamma_f$ and $\Gamma_M$ defined in \eqref{Gamma} are definite positive symmetric matrix, then the asymptotic normality \eqref{tlcqmle} of $\widehat{\theta}_n$ holds.
\end{enumerate}
\end{proposition}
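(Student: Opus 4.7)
The plan is to verify the hypotheses of Theorem \ref{SC} for part 1 and those of Theorem \ref{AN} for part 2, most of the groundwork having already been laid in the discussion preceding the proposition. The representation of ARMA-GARCH processes as an a.s.~causal stationary solution of \eqref{eq::sys} with $\theta_0\in\Theta(r)$ follows from applying Lemma \ref{cond2} to the GARCH innovation $(\varepsilon_t)$; this has already been done, and it has been shown that for each fixed $\theta_0$ the sequences $(\alpha_j^{(0)}(f,\{\theta_0\}))_j$ and $(\alpha_j^{(0)}(M,\{\theta_0\}))_j$ are EDS. The first step is therefore to upgrade these pointwise statements into uniform ones over the compact set $\Theta$. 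Since $\Theta$ is a compact subset of $\Theta(r)$, the coefficients $|\psi_j(\theta)|$ and $|\beta_j(\theta)|$ are continuous in $\theta$ and, using the standard analytic control of the roots of $P_\theta$ and of $1-\sum_{j=1}^{q'}d_j x^j$ on $\Theta$, both admit a common exponential bound $C\rho^j$ with $\rho\in[0,1)$ independent of $\theta$. Taking the sup over $\theta\in\Theta$ in the bounds of the preceding subsection then gives that $\alpha_j^{(0)}(f,\Theta)$ and $\alpha_j^{(0)}(M,\Theta)$ are EDS, so that $(A_0(f,\Theta))$ and $(A_0(M,\Theta))$ hold and the decay condition \eqref{decSC} is satisfied for any $\ell>0$.

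Next I would check Conditions C1, C2, C3. Compactness C1 is part of the assumption. For C2, the recursion $\sigma_t^2=c_0+\sum c_i\varepsilon_{t-i}^2+\sum d_j\sigma_{t-j}^2$ together with $c_0>0$ on $\Theta(r)$ and the compactness of $\Theta$ gives a uniform lower bound $c_0\geq\underline c_0>0$, hence $M_\theta^t=\sigma_t\geq\sqrt{\underline c_0}>0$ uniformly in $\theta\in\Theta$ and in the trajectory. For C3, if $M_{\theta_1}^t=M_{\theta_2}^t$ and $f_{\theta_1}^t=f_{\theta_2}^t$ a.s., then equating the generating functions $\Lambda_\beta(x)=P_\beta^{-1}(x)Q_\beta(x)$ and using coprimeness of $(P_\beta,Q_\beta)$ identifies the ARMA parameters $(a_i,b_j)$, after which equating the GARCH spectral identity $(1+\sum\beta_j x^j)(1-\sum d_j x^j)=\sum c_j x^j$, combined with $c_{p'},d_{q'}>0$ and $\sum d_j<1$, identifies $(c_0,c_i,d_j)$. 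Together with the previous step, this allows Theorem \ref{SC} to be applied, yielding the strong consistency in part 1.

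For part 2, one additionally needs $(A_i(f,\Theta))$ and $(A_i(M,\Theta))$ for $i=1,2$ with summable Lipschitz coefficients, and $\theta_0$ in the interior of $\Theta$. The interiority can be arranged by choosing $\Theta$ to be a compact neighbourhood of $\theta_0$ inside $\Theta(r)$. For the differentiability conditions, I would differentiate the recursions \eqref{bb} (or rather their ARMA-GARCH analogues) once and twice with respect to the coordinates of $\theta$: each partial derivative of $\psi_j$ and $\beta_j$ with respect to a component of $\theta$ satisfies an analogous linear recursion, so the classical argument using the analyticity of $(P_\theta,Q_\theta)^{-1}$ and $(1-\sum d_j L^j)^{-1}$ on the compact $\Theta\subset\Theta(r)$ gives an EDS bound on each of $\partial_\theta^i\psi_j$ and $\partial_\theta^i\beta_j$, and finally on $\alpha_j^{(i)}(f,\Theta)$ and $\alpha_j^{(i)}(M,\Theta)$ by the same convolution argument as before together with Lemma \ref{cond2}. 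This verifies all the Lipschitz hypotheses of Theorem \ref{AN}. The positive-definiteness of $\Gamma_f$ and $\Gamma_M$ is assumed, and the continuous differentiability of the cdf of $\zeta_0$ near $0$ is understood as an implicit standing assumption on the innovation law (as in Theorem \ref{AN}), so Theorem \ref{AN} applies and yields \eqref{tlcqmle}.

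The main obstacle I anticipate is the uniform control of the Lipschitz coefficients and of their derivatives over the whole compact $\Theta$: one has to argue that the radii of convergence of the power series $P_\theta^{-1}Q_\theta$ and $(1-\sum d_j x^j)^{-1}\sum c_j x^j$ remain bounded away from $1$ uniformly in $\theta\in\Theta$, and that this uniformity is preserved after taking one and two derivatives in $\theta$. The identifiability step for the GARCH component (distinguishing the roles of $c_i$ and $d_j$ when the two polynomials share a factor) also requires some care, but it is handled by the standard coprimality argument enforced by $c_{p'}>0$ and $d_{q'}>0$.
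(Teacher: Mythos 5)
Your proposal is correct and follows essentially the same route as the paper: verify the Lipschitz conditions via Lemma \ref{cond2} and the computations preceding the proposition, obtain C2 from $c_0>0$, and establish C3 by first identifying the ARMA parameters through the $(\psi_j)$ and then the GARCH parameters (the paper defers this last step to the coprimality/non-degeneracy argument in the proof of Proposition \ref{QMLAPARCH}, whereas you use the generating-function identity directly; note that coprimality of $\sum c_j x^j$ and $1-\sum d_j x^j$ is a standing assumption rather than a consequence of $c_{p'},d_{q'}>0$). Your added care about uniformity of the exponential bounds over the compact $\Theta$ is a detail the paper glosses over, but it does not change the argument.
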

\noindent This result is a new one and extends the previous results already obtained with Gaussian-QMLE for such processes (see for instance, \cite{LM} and \cite{BW}). \\
~\\
~\\
{\bf 3/ ARMA-ARCH$(\infty)$ processes.}  ARMA$(p,q)$-ARCH$(\infty)$ processes are a natural extension of ARMA-GARCH processes. They are the solution of the system of equations
\begin{equation}\label{armaa}
\begin{cases}
&P_\theta(L) \ X_t=Q_\theta(L) \ \varepsilon_t,\\
&\varepsilon_t=\sigma_t \zeta_t,\quad \mbox{with} \quad \sigma_t^2=c_0+\sum_{i=1}^{\infty}c_i \varepsilon_{t-i}^2
\end{cases}
\end{equation}
where
\begin{itemize}
\item $c_0>0$, $c_i\geq0$ for $i\geq 1$;
\item  $P_\theta(x)=1-a_1x-\cdots-a_px^p$ and $Q_\theta(x)=1-b_1x-\cdots-b_{q}x^{q}$ are coprime polynomials with $\sum_{i=1}^p |a_i|<1$ and $\sum_{i=1}^p |b_i|<1$.
\end{itemize}
ARCH$(\infty)$ processes were introduced by \cite{R} and the asymptotic properties of Gaussian-QMLE were studied in \cite{RZ}, \cite{SM} or \cite{BW}.
Hence, we assume that there exists $\beta=(\beta_1,\cdots,\beta_m)$ such as for all $i\in \N$, $c_i=c(i,\beta)$, with $c(\cdot)$ a known function.
Let $\theta=(\beta,a_1,\ldots, a_p, b_1,\ldots,b_{q} )$.
We are going to use Lemma \ref{cond2}. Since $(\varepsilon_t)$ is supposed to be an ARCH$(\infty)$, then $f^\varepsilon_\theta=0$ and $M^\varepsilon_\theta=\big (c(0,\beta)+\sum_{i=1}^\infty c(i,\beta) \varepsilon^2_{t-i} \big )^{1/2}$ and direct computations imply that the Lipshitz coefficients of $(\varepsilon_t)$ are such as $\alpha_j^{(0)}(f^\varepsilon,\{\theta_0\} )=0$ and $\alpha_j^{(0)}(M^\varepsilon,\{\theta_0\} )=c(j,\beta_0)$. Therefore we assume that there exists $\ell > 1$ such as
\begin{eqnarray} \label{condAA}
c(j,\beta_0)= {\cal O} \big ( j^{-\ell})~\mbox{when}~j \to  \infty.
\end{eqnarray}
Thus $(A_0(f^\varepsilon,\{\theta_0\}))$ and $(A_0(M^\varepsilon,\{\theta_0\}))$ hold. \\
Considering the ARMA part and denoting $(\psi_j)$ such as $\big ( 1 +\sum_{j=1}^\infty \psi_j x^j\big ) \big ( 1-\sum_{j=1}^\infty a_j x^j\big )=  \big ( 1-\sum_{j=1}^\infty b_j x^j\big )$, then  from Lemma \ref{cond2} we deduce that:
$$
\left \{ \begin{array}{lcl}
\alpha_j^{(0)}(f,\{\theta_0\} ) &=& |\psi_j | \\
\alpha_j^{(0)}(M,\{\theta_0\} ) &\leq & \sum_{k=1}^j |\psi_k |\times  c(j,\beta_0)
\end{array} . \right .
$$
Then we deduce that $(\alpha_j^{(0)}(f,\{\theta_0\} ))_j$ is EDS and $(\alpha_j^{(0)}(M,\{\theta_0\} ))_j= {\cal O} \big ( j^{-\ell})$. Then $(A_0(f,\{\theta_0\}))$ and $(A_0(M,\{\theta_0\}))$ hold, and $X$ is a.s. a solution of (\ref{eq::sys}) for $\theta$ included in the $r$-order stationarity set $\Theta(r)$ defined by
\begin{equation}\label{thetaAA}
\Theta(r)=\Big \{\theta\in\R^{p+q+m}~\Big/~
\sum_{i=1}^\infty|\psi_i(\theta)|+\big ( \E \big [ | \zeta_0|^r \big
] \big )^{1/r}\sum_{j=1}^\infty \sum_{k=1}^j |\psi_k |\times c(j,\beta_0)< 1\Big \}.
\end{equation}
Now the strong consistency and asymptotic normality of the Laplacian-QMLE for ARMA-ARCH$(\infty)$ processes can be established:
\begin{proposition}\label{QMLArmaarch}
Assume that $X$ is a stationary solution of \eqref{armaa} where \eqref{condAA} holds and with $\theta_0 \in \Theta$ where $\Theta$ is a compact subset of $\Theta(r)$ defined in \eqref{thetaAA}. Then,
\begin{enumerate}
  \item If $r \geq 1$ and $\ell \geq 2/\min(r,2)$, then $\widehat{\theta}_n   \limitepsn \theta_0$.
  \item If $r=2$, $\ell>1$ and if $\partial^i _\beta c(j,\beta)={\cal O}\big (j^{-\ell} \big )$ for $i=1,~2$, and if $\Gamma_f$ and $\Gamma_M$ defined in \eqref{Gamma} are definite positive symmetric matrix, then the asymptotic normality \eqref{tlcqmle} of $\widehat{\theta}_n$ holds.
\end{enumerate}
\end{proposition}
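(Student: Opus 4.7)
The approach is to verify the hypotheses of Theorem \ref{SC} for Part 1 and of Theorem \ref{AN} for Part 2, the main Lipschitz computations having been performed in the paragraphs preceding the statement. Only the assumption-verification step and a few additional derivative-level estimates need to be supplied.

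For Part 1 I would dispatch the three conditions of Theorem \ref{SC}. Compactness C1 is assumed. Condition C2 follows from the uniform lower bound $M_\theta^t \geq \sqrt{c_0(\theta)} \geq \sqrt{\inf_{\theta\in\Theta} c_0(\theta)} > 0$, where the infimum is strictly positive since $c_0$ is continuous and strictly positive on the compact $\Theta$. Identifiability C3 is obtained in two steps: $f_{\theta_1} = f_{\theta_2}$ together with coprimality of $(P_\theta, Q_\theta)$ and the stability constraints $\sum|a_i|<1, \sum|b_j|<1$ forces the ARMA parameters to coincide; then $M_{\theta_1} = M_{\theta_2}$ forces equality of all coefficients $c(j,\beta)$, and hence $\beta_1 = \beta_2$ under the (implicit) injectivity of $\beta \mapsto c(\cdot,\beta)$. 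The decay condition \eqref{decSC} then follows from the bounds $\alpha_j^{(0)}(f,\{\theta_0\}) = |\psi_j|$ (EDS) and $\alpha_j^{(0)}(M,\{\theta_0\}) \leq c(j,\beta_0) \sum_{k=1}^j |\psi_k| = O(j^{-\ell})$ already derived, once $\ell > 2/\min(r,2)$. Theorem \ref{SC} then yields strong consistency.

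For Part 2 I would additionally check Assumptions $(A_1)$ and $(A_2)$ on $f$ and $M$ over $\Theta$, not merely at $\{\theta_0\}$. For $f$, differentiation only involves the ARMA parameters, and the classical fact that ratios of stable polynomials have EDS inversion coefficients persists upon differentiating with respect to $(a_i, b_j)$; Lemma \ref{cond2} then transfers this to $\partial_\theta^i f_\theta$ over the compact $\Theta$. For $M$, differentiation with respect to $\beta$ brings in $\partial_\beta^k c(j,\beta)$ which by assumption is $O(j^{-\ell})$, and differentiation with respect to the ARMA parameters brings in convolutions of EDS sequences with $c(\cdot,\beta)$; combining these via an extension of Lemma \ref{cond2} to derivatives shows that $\alpha_j^{(i)}(M,\Theta) = O(j^{-\ell})$ for $i = 1, 2$. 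The positive-definiteness of $\Gamma_F$ and $\Gamma_M$ is part of the hypotheses, and the regularity of the cumulative distribution function of $\zeta_0$ at $0$ required by Theorem \ref{AN} is inherited from the standing assumptions on $\zeta_0$; the conclusion \eqref{tlcqmle} then follows.

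The main obstacle is the bookkeeping for the derivatives of $M_\theta^t$ under simultaneous perturbation of the ARMA and ARCH parameter blocks: one needs to verify that the convolution of an EDS sequence (from the ARMA inversion) with a polynomially decaying one (from the ARCH coefficients) still decays like $O(j^{-\ell})$, and that this rate is preserved after one and two differentiations. This amounts to a straightforward but tedious extension of the convolution bound of Lemma \ref{cond2} to first and second-order Lipschitz coefficients; once it is in place, Theorems \ref{SC} and \ref{AN} apply directly and the proposition is obtained.
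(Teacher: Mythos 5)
Your proposal is correct and follows essentially the same route as the paper: the paper gives no separate printed proof for this proposition, but its proofs of the sibling results (Propositions \ref{QMLArmagarch} and \ref{QMLArmaparch}) use exactly your template — C2 from $c_0>0$, C3 by first identifying the ARMA part from $f_\theta$ and then the volatility coefficients, the Lipschitz decay rates via Lemma \ref{cond2} and the convolution of an EDS sequence with an $O(j^{-\ell})$ one, and then a direct appeal to Theorems \ref{SC} and \ref{AN}. Your explicit flagging of the needed injectivity of $\beta\mapsto c(\cdot,\beta)$ and of the derivative-level convolution bounds is a welcome clarification of steps the paper leaves implicit.
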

\noindent This result is a new one. Note that $\ell>1$ and $r=2$ is required for the asymptotic normality of Laplacian-QMLE while $r=4$ and $\ell>2$ is required for Gaussian-QMLE for such processes (see for instance \cite{BW}). This confers a clear advantage to Laplacian-QMLE. \\
~\\
{\bf 4/ ARMA-APARCH processes.}
The ARMA$(p,q)$-APARCH$(p',q')$ processes have been also introduced by \cite{DGE} as the solutions of the equations
\begin{equation}\label{armaap}
\begin{cases}
&P_\theta(L) \, X_t=Q_\theta(L) \, \varepsilon_t,\\
&\varepsilon_t=\sigma_t \, \zeta_t,~\mbox{with} ~\sigma_t^\delta=\omega+\sum_{i=1}^{p'} \alpha_i(|\varepsilon_{t-i}|-\gamma_i \varepsilon_{t-i})^\delta+\sum_{j=1}^{q'}\beta_j\sigma_{t-j}^\delta
\end{cases}
\end{equation}
where:
\begin{itemize}
\item  $\delta \geq 1$, $\omega>0$, $-1 <\gamma_i < 1$ and $\alpha_i\geq 0$ for $i=1,\ldots,p'-1$, $\beta_j\geq0$ for $j=1,\ldots,q'-1$, $\alpha_{p'}, \,\beta_{q'}$ positive real numbers and $\sum_{j=1}^{p'} \alpha_j<1$;
\item $P_\theta(x)=1-a_1x-\cdots-a_px^p$ and $\Psi_\theta(x)=1-b_1x-\cdots-b_{q}x^{q}$ are coprime polynomials with $\sum_{i=1}^p |a_i|<1$ and $\sum_{i=1}^q |b_i|<1$ .
\end{itemize}
Let $\theta=(\delta,\omega,\alpha_1,\ldots,\alpha_{p'},\gamma_1,\ldots,\gamma_{p'}, \beta_1,\ldots,\beta_{q'},a_1,\ldots, a_p, b_1,\ldots,b_{q} )$.  Then, as for ARMA-GARCH processes, we are going to use Lemma \ref{cond2}. Thanks to the computations realized for APARCH processes, we obtain $\alpha_j^{(0)}(f^\varepsilon,\{\theta_0\} )=0$ and $\alpha_j^{(0)}(M^\varepsilon,\{\theta_0\} )=\max (|b^+_j|^{1/\delta}\, , \, |b^-_j|^{1/\delta})$ with  $(b_i^+,b_i^-)_{i\geq1}$ defined in \eqref{bb}. \\
Then, we have
$$
\left \{ \begin{array}{lcl}
\alpha_j^{(0)}(f,\{\theta_0\} ) &\leq& |\psi_j | \\
\alpha_j^{(0)}(M,\{\theta_0\} ) &\leq & \sum_{k=1}^j |\psi_k |\times  \max \big (|b^+_{j-k}|^{1/\delta}\, , \, |b^-_{j-k}|^{1/\delta} \big )
\end{array} . \right .
$$
$(\psi_j)$ such as $\big ( 1 +\sum_{j=1}^\infty \psi_j x^j\big ) \big ( 1-\sum_{j=1}^\infty a_j x^j\big )=  \big ( 1-\sum_{j=1}^\infty b_j x^j\big )$. From Lemma \ref{cond2}, $(A_0(f,\Theta))$  and $(A_0(M,\Theta))$ hold since $(\alpha_j^{(0)}(f^\varepsilon,\{\theta_0\} ))_j=0$ and $(\alpha_j^{(0)}(M^\varepsilon,\{\theta_0\} ))_j$ are EDS. As a consequence, for $r\geq 1$, the stationarity set $\Theta(r)$ is defined by
$$
\Theta(r)=\Big\{\theta\in\R^{p+q+p'+q'+2}~\Big/~ \sum_{j=1}^\infty|\psi_j| +\big ( \E \big | \zeta_0|^r \big
] \big )^{1/r}\sum_{j=1}^\infty \sum_{k=1}^j |\psi_k |\times  \max \big (|b^+_{j-k}|^{1/\delta}\, , \, |b^-_{j-k}|^{1/\delta} \big )< 1\Big \}.
$$
Now, we are able to provide the asymptotic properties of QMLE for ARMA-APARCH models.
\begin{proposition}\label{QMLArmaparch}
Assume that $X$ is a stationary solution of \eqref{armaap} with $\theta_0 \in \Theta$ where $\Theta$ is a compact subset of $\Theta(r)$ defined in \eqref{thetaAG}. Then,
\begin{enumerate}
  \item If $r=1$, then $\widehat{\theta}_n   \limitepsn \theta_0$.
  \item If $r=2$, and if $\Gamma_f$ and $\Gamma_M$ defined in \eqref{Gamma} are definite positive symmetric matrix, then the asymptotic normality \eqref{tlcqmle} of $\widehat{\theta}_n$ holds.
\end{enumerate}
\end{proposition}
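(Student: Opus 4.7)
The plan is to apply Theorem \ref{SC} for item~1 and Theorem \ref{AN} for item~2, after checking that their hypotheses are fulfilled by ARMA-APARCH processes. Most of the groundwork has already been laid in the discussion preceding the statement: combining Lemma \ref{cond2} with the APARCH Lipschitz coefficients computed for Proposition \ref{QMLAPARCH} yields the bounds $\alpha_j^{(0)}(f,\{\theta_0\})\le |\psi_j|$ and $\alpha_j^{(0)}(M,\{\theta_0\})\le \sum_{k=1}^j |\psi_k|\max(|b^+_{j-k}|^{1/\delta},|b^-_{j-k}|^{1/\delta})$, both of which are EDS as convolutions of EDS sequences.

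For the strong consistency (item~1, $r=1$), I would verify Conditions C1--C3 together with $(A_0(f,\Theta))$ and $(A_0(M,\Theta))$. C1 is the assumed compactness of $\Theta$. C2 follows from $\omega>0$: on $\Theta$ the innovation variance satisfies $\sigma_t^\delta\ge \omega$, and this uniform lower bound is preserved when passing from $\varepsilon_t$ to $X_t$ through the causal invertible ARMA filter. C3 follows from the coprimeness of $P_\theta$ and $Q_\theta$ (which identifies the ARMA parameters from the filter) together with the classical identifiability of APARCH parameters from the dynamics of $\sigma_t^\delta$. Since the Lipschitzian sequences are EDS, the decay condition \eqref{decSC} is satisfied for every $\ell>2$, and Theorem \ref{SC} yields $\widehat\theta_n \limitepsn \theta_0$.

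For the asymptotic normality (item~2, $r=2$), I would additionally check $(A_i(f,\Theta))$ and $(A_i(M,\Theta))$ for $i=1,2$. Differentiating the rational fraction $\Lambda_\beta(L)^{-1}$ with respect to $(a_i,b_j)$ produces power series whose coefficients remain EDS (standard for causal invertible ARMA). Differentiating the recursion \eqref{bb} once or twice with respect to $(\omega,\alpha_i,\gamma_i,\beta_j)$ yields linear recursions driven by the same contraction $(\beta_j)$, with $\sum_j\beta_j<1$, so the derivative coefficient sequences $\partial_\theta^i b^\pm_j$ are again EDS. Convolving with the EDS ARMA sequence, all $\alpha_j^{(i)}(f,\Theta)$ and $\alpha_j^{(i)}(M,\Theta)$ remain EDS, hence satisfy any polynomial decay; combined with the symmetry, continuous differentiability of the distribution of $\zeta_0$ at $0$, and the assumed positive definiteness of $\Gamma_F$ and $\Gamma_M$, Theorem \ref{AN} delivers \eqref{tlcqmle}.

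The main obstacle in this program is the treatment of the partial derivatives with respect to the power parameter $\delta$: differentiating $(|x|-\gamma x)^\delta$ in $\delta$ produces a factor $\log(|x|-\gamma x)$, which is unbounded near $x=0$. This singularity is handled by using the compactness of $\Theta$ (so $\delta$ stays in a compact subset of $[1,\infty)$) together with the moment bounds on $X$ and the elementary inequality $|\log u|\le C_\varepsilon(u^\varepsilon+u^{-\varepsilon})$ for arbitrarily small $\varepsilon>0$, which allows absorbing the logarithmic terms inside the EDS envelopes without violating the integrability bounds required by Theorem \ref{AN}. Apart from this technical point, the verification reduces to routine convolutions of EDS sequences and a direct invocation of Theorems \ref{SC} and \ref{AN}.
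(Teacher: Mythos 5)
Your proposal follows essentially the same route as the paper: the paper's proof is a one-line reference to the ARMA-GARCH case, which itself verifies C2 from the positivity of the intercept, C3 from ARMA identifiability of the $(\psi_j)$ followed by the APARCH identifiability argument of Proposition \ref{QMLAPARCH}, and the Lipschitz conditions via Lemma \ref{cond2}, before invoking Theorems \ref{SC} and \ref{AN}. Your additional care with the $\delta$-derivatives of $(|x|-\gamma x)^\delta$ addresses a technical point the paper silently skips, so your write-up is if anything more complete.
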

\noindent This result is stated for the first time for Laplacian-QMLE. The case of Gaussian-QMLE for ARMA-APARCH could be also obtained following the previous decomposition and the paper \cite{BW}. Once again, the asymptotic normality of Laplacian-QMLE only requires $r=2$ while this requires $r=4$ for Gaussian-QMLE.
\section{Numerical Results} \label{Simu}
To illustrate the asymptotic results stated previously, we realized Monte-Carlo experiments on the bevarior of Laplacian-QMLE (denoted $\widehat \theta_n^{LQL}$) for several time series models, sample sizes  and probability distributions. A comparison with the results obtained by Gaussian QMLE (denoted $\widehat \theta_n^{GQL}$) is also proposed. %
~\\ 
More precisely, the considered probability distributions of $(\zeta_t)$ are:
\begin{itemize}
\item Centred Gaussian distribution denoted ${\cal N}$;
\item Centred Laplacian distribution denoted ${\cal L}$;
\item Centred Uniform distribution denoted ${\cal U}$;
\item Centred Student distribution with $3$ freedom degrees, denoted $t_3$;
\item Normalized centred Gaussian mixture with probability distribution $0.05*{\cal N}(-2,0.16)+{\cal N}(0,1)+0.05*{\cal N}(2,0.16)$ and denoted ${\cal M}$.
\end{itemize}
All these probability distributions are normalized such as $\E [|\zeta_0|]=1$, required for Laplacian-QMLE. For using Gaussian-QMLE requiring $\sigma^2_\zeta=1$, it is necessary to consider the model with $M'_\theta=\frac {\E [|\zeta_0|]} {\sigma_\zeta} \, M_\theta$ instead of $M_\theta$.\\
Several models of time series satisfying \eqref{eq::sys} and the assumptions of Theorem \ref{SC} and \ref{AN} are considered:
\begin{itemize}
\item a ARMA$(1,1)$ process defined by $X_t=\phi \  X_{t-1} +  \zeta_t + \theta \zeta_{t-1}$ \  \  with $\phi=0.4 $ and $\theta=0.6$;
  \item a ARCH$(1)$ process defined by $X_t=\zeta_t \, \sqrt{\omega+\alpha X_{t-1}^2} $ \  \  with $\omega=0.4$ and $\alpha= 0.2$;
  \item a GARCH$(1,1)$ process defined by  $X_t=\zeta_t \, \sigma_t$ where $\sigma^2_t=\alpha_0+\alpha_1 X_{t-1}^2+\beta\sigma_{t-1}^2 $ with $\alpha_0=0.2$, $\alpha_1= 0.4$ and $\beta=0.2$;
  \item a ARMA$(1,1)$-GARCH$(1,1)$ process defined by $X_t=\phi X_{t-1}+\varepsilon_t+\theta \varepsilon_{t-1}$ where $\varepsilon_t=\zeta_t \,\sigma_t$ and $\sigma_t^2= \alpha_0+\alpha_1 \varepsilon_{t-1}^2+\beta\sigma_{t-1}^2 $ with $\phi=0.4,  \, \theta=0.6, \, \alpha_0=0.2, \,  \alpha_1=0.4$ and $\beta=0.1$;
  \item a ARMA$(1,1)$-APARCH$(1,1)$ process defined by  $X_t=\phi X_{t-1}+\varepsilon_t+\theta \varepsilon_{t-1}$
   where $\varepsilon_t=\zeta_t \,\sigma_t$ and $\sigma_t^\delta = \alpha_0+\alpha_1 \big ( |\varepsilon_{t-1}|-\gamma \varepsilon_{t-1} \big ) ^\delta +\beta\sigma_{t-1}^\delta $ and $\phi=0.4$, $\theta=0.6$, $\alpha_0=0.2$, $\alpha_1=0.4$,  $\gamma=0.5$, $\beta=0.1$ and  $\delta=1.2$.
\end{itemize}
Hence we computed the root-mean-square error (RMSE) from $1000$ independent replications of $\widehat \theta_n^{LQL}$ and $\widehat \theta_n^{LQL}$ for those processes and the results are presented in Table \ref{Table1} on
page~\pageref{Table1} and \ref{Table2} on
page~\pageref{Table2}.\\

\begin{table*}
\caption{Root Mean Square Error of the components of $\widehat{\theta}_n^{LQL}$ and $\widehat{\theta}_n^{GQL}$ for ARMA$(1,1)$, ARCH$(1)$ and GARCH$(1,1)$ processes.}
\label{Table1}
\centering
\begin{tabular}{lll|c|c|c|c|c|c|c|c|c|c|}
\cline{4-4}\cline{5-5}\cline{6-6}\cline{7-7}\cline{8-8}\cline{9-9}\cline{10-10}\cline{11-11}\cline{12-12}\cline{13-13}
&  &  & \multicolumn{2}{c|}{${\cal L}$} & \multicolumn{2}{c|}{${\cal N}$} & \multicolumn{2}{c|}{${t_3}$} &
\multicolumn{2}{c|}{${\cal U}$} & \multicolumn{2}{c|}{${\cal M}$} \\
\cline{3-3}\cline{4-4}\cline{5-5}\cline{6-6}\cline{7-7}\cline{8-8}\cline{9-9}\cline{10-10}\cline{11-11}\cline{12-12}\cline{13-13}
&  & $n$ & $\widehat \theta_n^{GQL}$ & $\widehat \theta_n^{LQL}$ & $\widehat \theta_n^{GQL}$ & $\widehat \theta_n^{LQL}$ & $\widehat \theta_n^{GQL}$ & $\widehat \theta_n^{LQL}$ & $\widehat \theta_n^{GQL}$ & $\widehat \theta_n^{LQL}$ & $\widehat \theta_n^{GQL}$ & $\widehat \theta_n^{LQL}$ \\
\cline{1-1}\cline{2-2}\cline{3-3}\cline{4-4}\cline{5-5}\cline{6-6}\cline{7-7}\cline{8-8}\cline{9-9}\cline{10-10}\cline{11-11}\cline{12-12}\cline{13-13}
ARMA(1,1) & $\theta$ & 100 &   0.106 &0.091& 0.114   & 0.117  & 0.113  & 0.090   &0.112 & 0.059  &0.110   &  0.078   \\
&  & 1000 &  0.031  & 0.024  & 0.032 & 0.032  & 0.036  & 0.027   &0.031 & 0.014  &  0.031 & 0.023    \\
&  & $5000$ &  0.014  &0.010   &  0.014  & 0.015  & 0.016  &0.011    &0.016 & 0.012  &0.013   &  0.010   \\
\cline{2-2}\cline{3-3}\cline{4-4}\cline{5-5}\cline{6-6}\cline{7-7}\cline{8-8}\cline{9-9}\cline{10-10}\cline{11-11}\cline{12-12}\cline{13-13}
& $\phi$ & 100  &0.119& 0.102  &  0.121  & 0.128  &0.123   &0.102&0.120 & 0.067  &0.121   &0.090     \\
&  & 1000 &  0.037  & 0.028  & 0.036   & 0.036  &0.040   & 0.030   &0.036 & 0.017  & 0.036  & 0.027    \\
&  & $5000$ &   0.016 &0.012   & 0.014   & 0.016  & 0.017  & 0.013   &0.016 &  0.007 &  0.014 &0.006     \\
\hline
\hline
ARCH(1) & $\omega$ & 100 &  0.068 & 0.061  &  0.048 & 0.049 & 0.254 &0.085   & 0.035 &  0.025  &  0.062&  0.052   \\
&  & 1000 & 0.020  & 0.018  & 0.015  & 0.015 &0.134 &0.049 & 0.011& 0.016   & 0.036 &0.018   \\
&  & $5000$ & 0.010  & 0.009  & 0.006  &  0.006&0.115  &0.044   &0.005 &  0.015   &0.031  & 0.008  \\
\cline{2-2}\cline{3-3}\cline{4-4}\cline{5-5}\cline{6-6}\cline{7-7}\cline{8-8}\cline{9-9}\cline{10-10}\cline{11-11}\cline{12-12}\cline{13-13}
& $\alpha$ & 100  &0.161   &0.155   & 0.141  &0.142    & 0.979& 0.418 & 0.102& 0.064 & 0.484& 0.423   \\
&  & 1000 &  0.063 &  0.058 &  0.043 &  0.043&0.852 &0.169 &0.029& 0.033   &  0.157& 0.133  \\
&  & $5000$ &  0.016 &  0.014 &  0.012 & 0.012 &    0.378 & 0.109 & 0.013  &   0.031     &  0.087& 0.062  \\
\hline
\hline
GARCH(1,1)&  $\alpha_0$&100 & 0.112  & 0.105  & 0.095  &0.100  & 0.211  &0.126    &0.081 &0.047 & 0.134 &0.114   \\
&   & 1000 &  0.036 &0.032   & 0.028  &0.028   & 0.098 & 0.058 &0.023   &0.018   &0.066  &0.051   \\
 & & $5000$ & 0.016  & 0.014  &0.012   &0.012   & 0.055&0.043 &0.010   &0.015   &  0.040& 0.023  \\
 \cline{2-2}\cline{3-3}\cline{4-4}\cline{5-5}\cline{6-6}\cline{7-7}\cline{8-8}\cline{9-9}\cline{10-10}\cline{11-11}\cline{12-12}\cline{13-13}
& $\alpha_1$ & 100 & 0.162  &0.157   & 0.149  & 0.150  &  0.453&0.364  & 0.115 & 0.070& 0.507   &0.429    \\
 & & 1000 &  0.061 & 0.056  &0.449   &  0.449 &0.333 &0.150   & 0.030 &0.033&0.160   &0.136   \\
 & & $5000$ &  0.029 & 0.026  &  0.020 & 0.020  &0.193  &0.095  & 0.013  &0.030   &0.086  &0.058   \\
\cline{2-2}\cline{3-3}\cline{4-4}\cline{5-5}\cline{6-6}\cline{7-7}\cline{8-8}\cline{9-9}\cline{10-10}\cline{11-11} \cline{12-12}\cline{13-13}
& $\beta$ & 100 &  0.225 &0.209   &  0.188 & 0.190  & 0.499& 0.429    &0.163&0.105& 0.483  &0.390    \\
&  & 1000 &  0.060 & 0.055  &0.051   &0.051   & 0.285 & 0.174  &0.044&0.022& 0.170  & 0.169     \\
&  & $5000$ & 0.027  &0.024   & 0.022  & 0.022  & 0.180 &  0.075  &  0.019 &0.009   & 0.072 &0.075   \\
\hline \\
\end{tabular}
\end{table*}

\begin{table*}
\caption{Root Mean Square Error of the components of $\widehat{\theta}_n^{LQL}$ and $\widehat{\theta}_n^{GQL}$ for ARMA$(1,1)$-GARCH$(1,1)$ and ARMA$(1,1)$-APARCH$(1,1)$ processes.}
\label{Table2}
\centering
\begin{tabular}{lll|c|c|c|c|c|c|c|c|c|c|}
\cline{4-4}\cline{5-5}\cline{6-6}\cline{7-7}\cline{8-8}\cline{9-9}\cline{10-10}\cline{11-11}\cline{12-12}\cline{13-13}
&  &  & \multicolumn{2}{c|}{${\cal L}$} & \multicolumn{2}{c|}{${\cal N}$} & \multicolumn{2}{c|}{${t_3}$} &
\multicolumn{2}{c|}{${\cal U}$} & \multicolumn{2}{c|}{${\cal M}$} \\
\cline{3-3}\cline{4-4}\cline{5-5}\cline{6-6}\cline{7-7}\cline{8-8}\cline{9-9}\cline{10-10}\cline{11-11}\cline{12-12}\cline{13-13}
&  & $n$ & $\widehat \theta_n^{GQL}$ & $\widehat \theta_n^{LQL}$ & $\widehat \theta_n^{GQL}$ & $\widehat \theta_n^{LQL}$ & $\widehat \theta_n^{GQL}$ & $\widehat \theta_n^{LQL}$ & $\widehat \theta_n^{GQL}$ & $\widehat \theta_n^{LQL}$ & $\widehat \theta_n^{GQL}$ & $\widehat \theta_n^{LQL}$ \\
\cline{1-1}\cline{2-2}\cline{3-3}\cline{4-4}\cline{5-5}\cline{6-6}\cline{7-7}\cline{8-8}\cline{9-9}\cline{10-10}\cline{11-11}\cline{12-12}\cline{13-13}
\hline
ARMA$(1,1)$  & $\theta$ & 100 & 0.120 &0.097  &0.107  &0.107   &0.121 &0.098   & 0.097  &0.067  &0.123   &0.087   \\
-GARCH$(1,1)$ & & 1000 &0.035  &0.024  &0.028  &0.028   & 0.048 & 0.029  & 0.024  & 0.015 &0.035   &0.026   \\
&  & $5000$ &  0.016& 0.010 &0.012  &0.012& 0.023 &0.012  &0.015  &0.011   &0.011&0.007\\
\cline{2-2}\cline{3-3}\cline{4-4}\cline{5-5}\cline{6-6}\cline{7-7}\cline{8-8}\cline{9-9}\cline{10-10}\cline{11-11}\cline{12-12}\cline{13-13}
& $\phi$ & 100 &0.135  & 0.109&0.117 & 0.119 &0.141 & 0.116  &0.110   &0.077 & 0.132  &0.102   \\
&  & 1000 &0.044  &0.030  &0.033  &0.033   & 0.063 &0.035  &  0.029 &0.023  &0.053   &0.046   \\
&  & $5000$ &0.020  &0.014  &0.015  &0.015& 0.029 & 0.015 & 0.013&0.012 &0.019  &0.014   \\
\cline{2-2}\cline{3-3}\cline{4-4}\cline{5-5}\cline{6-6}\cline{7-7}\cline{8-8}\cline{9-9}\cline{10-10}\cline{11-11}\cline{12-12}\cline{13-13}
& $\alpha_0$&100& 0.104 & 0.096 &0.085  &0.084   &0.158 & 0.129 &  0.073 & 0.055 &0.131   &0.116   \\
&  & 1000 & 0.031 & 0.028 &0.025  &0.025   & 0.241 & 0.060& 0.021  &0.019  & 0.053  &0.046   \\
&  & $5000$& 0.014  &0.012&0.010  &0.010  & 0.052 & 0.042  &0.009   &0.016 &0.036&0.019  \\
\cline{2-2}\cline{3-3}\cline{4-4}\cline{5-5}\cline{6-6}\cline{7-7}\cline{8-8}\cline{9-9}\cline{10-10}\cline{11-11}\cline{12-12}\cline{13-13}
& $\alpha_1$ &100& 0.179 &0.177  &0.166  &0.167   &0.469 & 0.385  & 0.134  &0.107  &0.494   &0.405   \\
&  & 1000 & 0.064& 0.060 &0.045  &0.045   & 0.328 &0.161 & 0.031  &0.046&0.160     &0.137   \\
&  & $5000$ &0.031  & 0.027 & 0.020 &0.020   &0.182  &0.096  &0.013   & 0.038 &0.090   &0.062   \\
\cline{2-2}\cline{3-3}\cline{4-4}\cline{5-5}\cline{6-6}\cline{7-7}\cline{8-8}\cline{9-9}\cline{10-10}\cline{11-11}\cline{12-12}\cline{13-13}
& $\beta$ & 100 & 0.302 & 0.269 &0.252  &0.233   & 0.604 & 0.497 &0.217   &0.164  &0.553   &0.472   \\
&  & 1000& 0.057& 0.051&  0.051& 0.051  & 0.312  & 0.187  &0.045   &0.049  &0.165   &0.170   \\
&  & $5000$ & 0.025 & 0.022 & 0.020 &0.020   & 0.199 & 0.073 &0.062   &0.066  &0.019   &0.025   \\
\hline
\hline
ARMA$(1,1)$& $\theta$ & 100 & 0.110 &0.086   &0.096  &0.101   & 0.112  &0.090   & 0.097  &0.068  &0.125   & 0.091    \\
-APARCH$(1,1)$  & & 1000 &0.029  &0.021   &  0.023& 0.024   & 0.031  & 0.021  &0.022   & 0.014    &   0.033 &0.024 \\
  & &  $5000$ & 0.013 & 0.008  &0.010  &0.010   & 0.014  &0.009   &0.010   & 0.006    & 0.015 & 0.011\\
\cline{2-2}\cline{3-3}\cline{4-4}\cline{5-5}\cline{6-6}\cline{7-7}\cline{8-8}\cline{9-9}\cline{10-10}\cline{11-11}\cline{12-12}\cline{13-13}
 &$\phi$ & 100 &  0.138&  0.114 &  0.121&  0.126 & 0.128  &0.107   & 0.111  & 0.086    &  0.146  &0.107 \\
 & &   1000 & 0.040 & 0.027  & 0.032 & 0.032 & 0.041  & 0.028  &  0.029 &0.026     & 0.043  &0.030 \\
 & &   $5000$ & 0.018 & 0.012  &  0.012& 0.012  &  0.020 & 0.012  & 0.013  &0.014     &    0.019& 0.013\\
\cline{2-2}\cline{3-3}\cline{4-4}\cline{5-5}\cline{6-6}\cline{7-7}\cline{8-8}\cline{9-9}\cline{10-10}\cline{11-11}\cline{12-12}\cline{13-13}
 &$\omega$ & 100 & 0.198 & 0.192  &0.199  & 0.210  & 0.254  & 0.262  & 0.221  & 0.170    & 0.290   & 0.272\\
  & &  1000 &  0.079 & 0.067 & 0.056&0.056  & 0.226  & 0.218  &  0.044 & 0.045    &  0.142  &0.129 \\
 & &   $5000$ & 0.033 & 0.028  & 0.025 & 0.025  &  0.209 & 0.207  &0.017   & 0.029    &   0.061 & 0.056\\
\cline{2-2}\cline{3-3}\cline{4-4}\cline{5-5}\cline{6-6}\cline{7-7}\cline{8-8}\cline{9-9}\cline{10-10}\cline{11-11}\cline{12-12}\cline{13-13}
 &$\alpha$ & 100 & 0.206 & 0.201  & 0.183 & 0.184  & 0.464  &0.449   & 0.167  & 0.131    &   0.352 &0.327 \\
& &    1000 &0.060  & 0.053  &     0.041   & 0.041 & 0.447   & 0.432 &  0.029& 0.043  &   0.143 &0.134 \\
  & &  $5000$ &  0.025& 0.023  & 0.018 &0.018   &0.421   & 0.414  & 0.012  & 0.027    &  0.071  & 0.059\\
\cline{2-2}\cline{3-3}\cline{4-4}\cline{5-5}\cline{6-6}\cline{7-7}\cline{8-8}\cline{9-9}\cline{10-10}\cline{11-11}\cline{12-12}\cline{13-13}
 &$\gamma$ & 100 &  0.413& 0.386  & 0.346 &0.356   & 0.439  &0.426   & 0.310  & 0.233    & 0.613   & 0.601\\
  & &  1000 & 0.105 & 0.094  &0.071 & 0.070  & 0.101  & 0.092  & 0.057  &0.041     &  0.217  &0.220 \\
 & &   $5000$ &0.042  & 0.039  &0.029  &0.029   &  0.045 & 0.038  &   0.024& 0.018    & 0.086   & 0.089\\
\cline{2-2}\cline{3-3}\cline{4-4}\cline{5-5}\cline{6-6}\cline{7-7}\cline{8-8}\cline{9-9}\cline{10-10}\cline{11-11}\cline{12-12}\cline{13-13}
 &$\beta$ & 100 & 0.297 & 0.282  & 0.255 & 0.238  &  0.312 & 0.288  & 0.186  &0.145     &  0.476  &0.468 \\
  & &  1000 & 0.074 & 0.067  &    0.058 & 0.058 &  0.074 & 0.066  & 0.043  & 0.033    &  0.151  & 0.150\\
 & &   $5000$ & 0.033 &0.031   & 0.024 & 0.024  &  0.034 & 0.029  &  0.018 &0.012     &   0.061 & 0.063\\
\cline{2-2}\cline{3-3}\cline{4-4}\cline{5-5}\cline{6-6}\cline{7-7}\cline{8-8}\cline{9-9}\cline{10-10}\cline{11-11}\cline{12-12}\cline{13-13}
 &$\delta$ & 100 & 0.732 & 0.732  &  0.712& 0.717  &  0.740 & 0.739  & 0.703  & 0.613    &   0.830 & 0.815\\
 & &   1000& 0.402 & 0.352  &  0.296  &0.296   &   0.394  & 0.338  & 0.235&0.290    &  0.542  &0.534 \\
& &    $5000$ &  0.170& 0.147  &0.132  &0.131   &  0.169 & 0.145  &0.092   &0.168     &   0.251 & 0.262\\
\hline \\
\end{tabular}
\end{table*}
\noindent {\bf Conclusion of the numerical results:} On the one hand, it is clear that the RMSE decreases as the sample size increases, which validates the theoretical results (consistency of the estimators). On the other hand, Table \ref{Table1} and \ref{Table2} show that the Laplacian-QMLE provides more accurate estimation than the Gaussian-QMLE for several types of noise, except of course in the case of a Gaussian distribution (even in this case the RSME of both the estimators are almost the same).
\section{Proofs} \label{Proofs}
\begin{proof}[Proof of Lemma \ref{cond2}]
First, as $X$ is a stationary process and the ARMA$(p,q)$ process is causal invertible then $\widetilde X$ is also a stationary process (the coefficients of $\Lambda_{\beta_0}$ are EDS). Moreover, it is well known that $(\psi_j(\beta_0))_{j\in \N}$ is EDS. Then we have:
\begin{eqnarray}
\nonumber \widetilde X_t & =& \Lambda_{\beta_0}(L) \ \Big ( M _{\theta_0}\big (( X_{t-i})_{i\geq 1}\big ) \, \zeta_t + f _{\theta_0}\big (( X_{t-i})_{i\geq 1}\big ) \Big ) \\
\nonumber \widetilde X_t +\sum_{j=1}^\infty \psi_j(\beta_0) \widetilde X_{t-j}&=& M _{\theta_0}\big ((\Lambda_{\beta_0}^{-1}(L) \widetilde X_{t-i})_{i\geq 1}\big ) \, \zeta_t+ f _{\theta_0}\big ((\Lambda_{\beta_0}^{-1}(L) \widetilde X_{t-i})_{i\geq 1}\big ) \\
\nonumber \widetilde X_t &=& \widetilde M _{\widetilde \theta_0}\big ( (\widetilde X_{t-i})_{i\geq 1}\big ) \, \zeta_t+ \widetilde f _{\widetilde \theta_0}\big (( \widetilde X_{t-i})_{i\geq 1}\big ) \\
&& \label{Mftilde} \hspace{-1cm}\mbox{with}\quad \Big \{ \begin{array}{lcl} \widetilde M _{\widetilde \theta_0}\big ( (x_{t-i})_{i\geq 1}\big )&=&M _{\widetilde \theta_0}\big ((\Lambda_{\beta_0}^{-1}(L) x_{t-i})_{i\geq 1}\big ) \\ \widetilde f _{\widetilde \theta_0}\big ( (x_{t-i})_{i\geq 1}\big )&=&f _{\theta_0}\big ((\Lambda_{\beta_0}^{-1}(L) x_{t-i})_{i\geq 1}\big ) -\sum_{j=1}^\infty \psi_j(\beta_0) x_{t-j}
\end{array}.
\end{eqnarray}
Finally, for $i=0$,
\begin{eqnarray}
   \nonumber\big |\widetilde f _{\widetilde \theta_0}\big ( (x_{t-i})_{i\geq 1}\big )-\widetilde f _{\widetilde\theta_0}\big ( (y_{t-i})_{i\geq 1}\big ) \big | & \leq & \sum_{j=1}^\infty  \alpha_j^{(0)}(f,\{\theta_0\})  \, \big | (\Lambda^{-1}_{\beta_0}(L) x_{t-j-i})_{i\geq 1}-(\Lambda_{\beta_0}^{-1}(L) y_{t-j-i})_{i\geq 1}  \big | \\
\nonumber 	&& \hspace{7cm}+|\psi_j(\beta_0)| \, | x_{t-j} -y_{t-j} | \\
  \nonumber & \leq & \sum_{j=1}^\infty  \alpha_j^{(0)}(f,\{\theta_0\})  \, \big |\sum_{k=0}^\infty |\psi_k(\beta_0) | \, |x_{t-k-j} -y_{t-k-j} \big |  +|\psi_j(\beta_0)| \, | x_{t-j} -y_{t-j} |  \\
 \nonumber & \leq & \sum _{j=1}^\infty \Big ( |\psi_j(\beta_0)| +\sum_{k=1}^j \alpha_k^{(0)}(f,\{\theta_0\}) \psi_{j-k} (\beta_0)\Big ) \, |x_{t-j} -y_{t-j} \big | \\
\label{alphaftilde} && \hspace{-2cm} \Longrightarrow \quad \alpha_j^{(0)}(\widetilde f,\{\widetilde \theta_0\})  \leq |\psi_j(\beta_0)| +\sum_{k=1}^j \alpha_k^{(0)}(f,\{\theta_0\}) \big | \psi_{j-k}(\beta_0) \big |.
\end{eqnarray}
Moreover, we also have:
\begin{eqnarray}
 \nonumber \big |\widetilde M _{\widetilde \theta_0}\big ( (x_{t-i})_{i\geq 1}\big )-\widetilde M _{\widetilde \theta_0}\big ( (y_{t-i})_{i\geq 1}\big ) \big | & \leq & \sum_{j=1}^\infty  \alpha_j^{(0)}(M,\{\theta_0\})  \, \big | (\Lambda_{\beta_0}^{-1}(L) x_{t-j-i})_{i\geq 1}-(\Lambda_{\beta_0}^{-1}(L) y_{t-j-i})_{i\geq 1}  \big |
  \\
\label{alphaMtilde} && \hspace{-2cm} \Longrightarrow \quad \alpha_j^{(0)}(\widetilde M,\{\widetilde \theta_0\})   \leq \sum_{k=1}^j \alpha_k^{(0)}(M,\{\theta_0\})  \big | \psi_{j-k} \big |.
\end{eqnarray}
The same kinds of computations could also be done by considering the first and second derivatives of $\widetilde f$ and $\widetilde M$ with respect to $\widetilde \theta$. Note, and this is important, that the first and second derivatives of $\Lambda^{-1}_{\beta}$ with respect to $\widetilde \theta$ are also EDS. Finally,
\begin{itemize}
\item if when $j \to \infty$, $\alpha_j^{(0)}(K,\{\theta_0\})=O(j^{-\beta})$ with $\beta>1$ and $\psi_{j}=O(\rho^j)$ with $0\leq \rho <1$, then there exists $C>0$ such as  $\sum_{k=1}^j \alpha_k^{(0)}(K,\{\theta_0\})  \big | \psi_{j-k} \big |\leq C \, \sum_{k=1}^j k^{-\beta} \rho ^{j-k}\sim -C (\log \rho)^{-1} j^{-\beta}$ and therefore $\alpha_j^{(0)}(\widetilde K,\{\widetilde \theta_0\}) =O(j^{-\beta})$.
\item if when $j \to \infty$,  $\alpha_j^{(0)}(K,\{\theta_0\})=O(r^{j})$ with $0 \leq r<1$ and $\psi_{j}=O(\rho^j)$ with $0\leq \rho <1$, then there exists $C>0$ such as  $\sum_{k=1}^j \alpha_k^{(0)}(K,\{\theta_0\})  \big | \psi_{j-k} \big |\leq C \, \sum_{k=1}^j r^{-k} \rho ^{j-k}=O(j \, \max(r,\rho)^j)$ and therefore $\alpha_j^{(O)}(\widetilde K,\{\widetilde \theta_0\})$ is EDS.
\end{itemize}
The same kind of computation can be also done for $(\alpha_j^{(i)}(\widetilde K,\{\widetilde \theta_0\}))_j$ since the derivatives and second-derivatives of $\Lambda^{-1}_{\beta_0}$ with respect to $\beta$ and therefore to $ \widetilde \theta$ are also EDS.
\end{proof}
Now we remind two lemmas already proved in \cite{BW}:
\begin{lemma}\label{lem::estvar}
Assume that $\theta_0\in \Theta(r)$ for $r \geq 1$ and $X$ is the causal stationary
solution of the equation \eqref{eq::sys}. If { $(A_0(K,\Theta))$} holds (with $K=f$ or $K=M$) then $K_\theta^t\in L^r({\cal C}(\Theta,R^m))$ and there exists $C>0$ not depending on $t$ such that
\begin{equation}\label{eq::estf}
\E \big [\|\widehat K_\theta^t-K_\theta^t\|_{\Theta}^r \big ] \le C \,
\E\big [|X_0|^r \big ]\Big (\sum_{j\ge t}\alpha_j(K,\Theta)\Big )^r\qquad \mbox{for all $t \in N^*$.}
\end{equation}
\end{lemma}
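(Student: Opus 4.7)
The plan is to reduce everything to a single application of the Lipschitz-type assumption $(A_0(K,\Theta))$ followed by a Minkowski inequality in $L^r$. Let me sketch how this would go.

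First I would write out $K_\theta^t=K_\theta(X_{t-1},X_{t-2},\ldots)$ and $\widehat K_\theta^t=K_\theta(X_{t-1},\ldots,X_1,u_1,u_2,\ldots)$, so that the two arguments coincide on the first $t-1$ coordinates and differ only from index $t$ onward, where the true input is $X_0,X_{-1},\ldots$ and the proxy input is $u_1,u_2,\ldots$. Applying $(A_0(K,\Theta))$ directly gives the pointwise (a.s.) bound
\begin{equation*}
\big\|\widehat K_\theta^t-K_\theta^t\big\|_\Theta\;\le\;\sum_{j\ge t}\alpha_j^{(0)}(K,\Theta)\,\big|X_{t-j}-u_{j-t+1}\big|.
\end{equation*}
The $L^r$ integrability of $K_\theta^t$ (viewed as a random element of $\mathcal{C}(\Theta,\R^m)$) follows from the same inequality applied with the null sequence, combined with $\|K_\theta(0)\|_\Theta<\infty$, Proposition \ref{stationarity}, and the summability of $\alpha_j^{(0)}(K,\Theta)$.

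Next I would take the $L^r$-norm of the above bound and apply the Minkowski inequality for infinite sums:
\begin{equation*}
\Big(\E\big[\|\widehat K_\theta^t-K_\theta^t\|_\Theta^r\big]\Big)^{1/r}\;\le\;\sum_{j\ge t}\alpha_j^{(0)}(K,\Theta)\,\Big(\E\big|X_{t-j}-u_{j-t+1}\big|^r\Big)^{1/r}.
\end{equation*}
Stationarity of $X$ gives $(\E|X_{t-j}|^r)^{1/r}=(\E|X_0|^r)^{1/r}$, while $u$ being finitely non-zero makes $\sup_k|u_k|$ finite and, in fact, zero for all but finitely many indices. Hence each factor $(\E|X_{t-j}-u_{j-t+1}|^r)^{1/r}$ is bounded by $C_0(\E|X_0|^r)^{1/r}$ for a constant $C_0$ independent of $t$ (using that $\E|X_0|^r\ge (\E|X_0|)^r$ is harmless up to adjusting constants, or simply bounding $|u_k|$ by a multiple of this quantity for the finitely many nonzero $u_k$).

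Raising to the $r$-th power then yields
\begin{equation*}
\E\big[\|\widehat K_\theta^t-K_\theta^t\|_\Theta^r\big]\;\le\;C\,\E\big[|X_0|^r\big]\Big(\sum_{j\ge t}\alpha_j^{(0)}(K,\Theta)\Big)^r,
\end{equation*}
which is exactly \eqref{eq::estf}. I do not foresee any real obstacle: the only delicate point is keeping track of the index shift in the Lipschitz bound and handling the contribution of the proxy sequence $u$; since $u$ has finite support, this contribution is absorbed into the constant $C$, and the whole proof is essentially a Lipschitz-plus-Minkowski computation.
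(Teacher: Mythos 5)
Your argument is correct: the pointwise Lipschitz bound from $(A_0(K,\Theta))$ applied to the two input sequences (which agree on the first $t-1$ coordinates and differ only from index $t$ onward), followed by Minkowski's inequality in $L^r$ and stationarity of $X$, is exactly the standard proof of this estimate, which the paper itself does not reproduce but imports from \cite{BW}. The only point worth making explicit is that absorbing the finitely many nonzero $u_k$ into the constant $C$ requires $\E\big[|X_0|^r\big]>0$, which holds here because $M_{\theta_0}>0$ and $\zeta_0$ is non-degenerate (and disappears entirely under the typical choice $u=0$).
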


\begin{lemma}\label{lem::prel}
Let ${\cal D}^{(2)}(\Theta)$ denote the Banach space of $2$ times continuously differentiable functions on $\Theta$ equipped with the uniform norm
$$
\displaystyle \|h\|_{2,\Theta}=\|h\|_\Theta+ \Big
\|\frac {\partial h}{\partial \theta} \Big
\|_\Theta+\Big \|\frac {\partial^2
h}{\partial \theta \partial \theta'} \Big \|_\Theta.
$$
Let $\theta_0 \in \Theta(r)$ ($r\ge1$) and assume that for $i = 0,~1,~2,$ $(A_i (f,\Theta))$ and $(A_i (M,\Theta))$  hold. Then
$f_\theta^t\in \L^{r}\big ({\cal D}^{(2)}(\Theta)\big )$ and $M_\theta^t\in \L^{r}\big ({\cal D}^{(2)}(\Theta)\big ).$
\end{lemma}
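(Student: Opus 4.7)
The plan is to bound each of the three pieces $\|K_\theta^t\|_\Theta$, $\|\partial_\theta K_\theta^t\|_\Theta$ and $\|\partial^2_{\theta\theta'} K_\theta^t\|_\Theta$ separately in $L^r(\Omega)$, using the assumption $(A_k(K,\Theta))$ for $k=0,1,2$, and then add the three bounds to control $\|K_\theta^t\|_{2,\Theta}$. The point of the argument is that each assumption $(A_k(K,\Theta))$ delivers at once (i) $\mathcal{C}^k$-regularity in $\theta$ for every realization of the past, and (ii) a Lipschitz inequality which, specialized to $y=0$, yields a uniform-in-$\theta$ \emph{linear} majorant of $\|\partial_\theta^k K_\theta(x)\|_\Theta$ in $(|x_j|)_{j\ge1}$.

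More concretely, for each $k=0,1,2$, taking $y=0$ in $(A_k(K,\Theta))$ gives, for every $x\in\R^\N$,
\[
\big\|\partial_\theta^k K_\theta(x)\big\|_\Theta \ \le\ \big\|\partial_\theta^k K_\theta(0)\big\|_\Theta +\sum_{j=1}^\infty \alpha_j^{(k)}(K,\Theta)\,|x_j|.
\]
Evaluating at $x=(X_{t-1},X_{t-2},\ldots)$ and applying Minkowski's inequality in $L^r$ together with the stationarity of $X$ (Proposition \ref{stationarity}, which provides $\E[|X_0|^r]<\infty$ since $\theta_0\in\Theta(r)$) yields
\[
\Big(\E\big\|\partial_\theta^k K_\theta^t\big\|_\Theta^r\Big)^{1/r}\ \le\ \big\|\partial_\theta^k K_\theta(0)\big\|_\Theta +\big(\E|X_0|^r\big)^{1/r}\sum_{j=1}^\infty \alpha_j^{(k)}(K,\Theta)\ <\ \infty,
\]
since the $\alpha_j^{(k)}(K,\Theta)$ are summable by assumption and $\|\partial_\theta^k K_\theta(0)\|_\Theta$ is finite. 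Summing the three bounds and using the triangle inequality for $\|\cdot\|_{2,\Theta}$ and for the $L^r$ norm gives $\E\|K_\theta^t\|_{2,\Theta}^r<\infty$, which is the conclusion for $K=f$; the case $K=M$ is identical.

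The main obstacle, which is really a technical issue rather than a conceptual one, is to justify that the quantity $\|\partial_\theta^k K_\theta^t\|_\Theta$ is measurable, so that one can actually speak of its $L^r$ norm, and that $K_\theta^t$ is a bona fide random element of the separable Banach space $\mathcal{D}^{(2)}(\Theta)$. Both points follow from the continuity in $\theta$ provided by $(A_k(K,\Theta))$ together with the compactness of $\Theta$: the supremum over $\Theta$ can be taken over a countable dense subset, which makes $\|\partial_\theta^k K_\theta^t\|_\Theta$ a measurable random variable, and the pointwise $\mathcal{C}^2$-regularity in $\theta$ together with the Lipschitz bound make $\omega\mapsto K_{\cdot}^t(\omega)\in\mathcal{D}^{(2)}(\Theta)$ strongly measurable. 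Once this is in place, the three displayed estimates above close the argument with no further work.
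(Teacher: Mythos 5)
Your proof is correct. Note that the paper does not actually prove this lemma --- it is quoted verbatim from \cite{BW} ("Now we remind two lemmas already proved in \cite{BW}") --- but your argument is exactly the standard one used there: specialize the Lipschitz bound of $(A_k(K,\Theta))$ at $y=0$ to get the linear majorant $\|\partial_\theta^k K_\theta(0)\|_\Theta+\sum_j\alpha_j^{(k)}(K,\Theta)|X_{t-j}|$, then apply Minkowski's inequality in $L^r$ together with $\E[|X_0|^r]<\infty$ from Proposition \ref{stationarity}, and sum over $k=0,1,2$; your remarks on measurability via a countable dense subset of the compact set $\Theta$ take care of the only remaining technicality.
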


\noindent Now, we begin with the proofs of Theorem \ref{SC}, \ref{Davis} and \ref{AN}.
\begin{proof}[Proof of Theorem \ref{SC}]

The proof of the theorem is divided into two parts and follows the same kind of procedure than in \cite{J}. In (i), a uniform (on $\Theta$) strong law of large numbers satisfied by $ \frac 1 n \, \widehat L_n(\theta)$  converging to $L(\theta) := -\E [q_0 (\theta)]$ is established. In (ii), it is proved that $L(\theta)$ admits a unique maximum in $\theta_0$. Those two conditions lead to the strong consistency of $\widehat{\theta}_n$ (from \cite{J}).\\
~\\
(i) In the same way and for the same reason in the proof of Theorem 1 of \cite{BW}, the uniform strong law of large numbers satisfied by the sample mean of $(\widehat{q}_t )_{t\in N^*}$ (defined in \eqref{eq::QMLE}]) is implied
by establishing $ \E [\|q_t (\theta)\|_\Theta] < \infty$. But new computations have to be done in case of Laplacian conditional log-density $q_t(\theta)$. From Lemma \ref{lem::estvar}, for all $t \in Z$,\\
\begin{align*}
	\begin{split}
|q_t(\theta)|	&= \left| {{ (M_\theta^t)^{-1} | X_t-f^t_{\theta}|}}+\log (M^t_\theta) \right|\\
    &\leq \frac{|X_t-f^t(\theta)|}{\underline{M}}+\big |\log(\underline{M})\big |+ M_\theta^t \\
		\Longrightarrow \qquad \sup_{\theta\in\Theta}|q_t(\theta)| & \leq \frac{1 }{\underline{M}} \big (|X_t| + \|f^t(\theta)\|_\Theta  \big ) +\big |\log(\underline{M})\big |+ \| M_\theta^t \|_\Theta.
	\end{split}					
\end{align*}
With $r\geq 1$, we have $\forall t \in \Z,~\E [|X_t|] < \infty$ from Proposition \ref{stationarity} and $\E \big  [\|f_\theta^t\|^r_\Theta+\|M_\theta^t\|^r_\Theta \big ] < \infty$ from Lemma \ref{lem::estvar}, implying $\E \big [\|f_\theta^t\|_\Theta+\|M_\theta^t\|_\Theta \big ] < \infty$. As a consequence, for all $t\in \Z$,
\begin{equation*}
\E \big [\|q_t(\theta) \|_\Theta \big ]< \infty.
\end{equation*}
Hence, the uniform strong law of large numbers for $(q_t (\theta))$ follows:
\begin{equation}\label{Las}
\Big\|\frac{L_n(\theta)}{n}-L(\theta)\Big\|_\Theta\limitepsn  0 .
\end{equation}
Now, we are going to establish  $\frac{1}{n} \big \|\widehat L_n(\theta)-L_n(\theta) \big \|_\Theta\limitepsn  0$. Indeed, for all $\theta \in \Theta$ and $t\in N^*$,
\begin{align*}
	\begin{split}
|\widehat{q_t}(\theta)-q_t(\theta)|	&\leq \big|\log({\widehat{M}}_\theta^t)-\log({M}_\theta^t)+ (\widehat{M}_\theta^t)^{-1} | X_t-\widehat{f}^t_{\theta}|-(M_\theta^t)^{-1} | X_t-f^t_{\theta}|\big|\\
                &\leq {\big|{\widehat{M}_\theta}^t-{{M}_\theta}^t\big |}{\underline{M}^{-1}}+ |\widehat{M}_\theta^t-M_\theta^t|  \underline{M}^{-2}|X_t-f^t_{\theta}|+\underline{M}^{-1} | \widehat{f}^t_{\theta}-f^t_{\theta}|\\
	\end{split}					
\end{align*}
with $C>0$.
Hence, we have:
\begin{align*}
	\begin{split}
\|\widehat{q_t}(\theta)-q_t(\theta)\|_\Theta	\leq C\big( 1+ |X_t |+\|{f}_\theta^t\|_\Theta \big)\big(\|\widehat{M}_\theta^t-{M}_\theta^t\|_\Theta+\|\widehat{f}_\theta^t-{f}_\theta^t\|_\Theta\big).
	\end{split}					
\end{align*}
By Corollary 1 of \cite{KW}, the proof is achieved if there exists $s \in (0,1]$ such as
\begin{equation}\label{eq::pp}
\sum_{t\geq1} {{\frac{1}{t^s}}}\,  \E \big [\|q_t(\theta)-\widehat{q}_t(\theta)\|_\Theta^{s}\big ]<  \infty.
\end{equation}
Let us prove \eqref{eq::pp} with $s=r/2$ when $r \in [1,2]$.\\
From Cauchy-Schwarz Inequality and assumptions $A_0(f, {\Theta})$ and $A_0(M, {\Theta})$,
\begin{align*}
	\begin{split}
\E \big [\|\widehat{q_t}(\theta)-q_t(\theta)\|_\Theta^{r/2} \big]	\leq C \, \big ( \E \big[ (1+ |X_t |+\|{f}_\theta^t\|_\Theta)^{r}\big ] \big )^\frac{1}{2}  \, \big ( \E \big [(\|\widehat{M}_\theta^t-{M}_\theta^t\|_\Theta+\|\widehat{f}_\theta^t-{f}_\theta^t\|_\Theta)^{r}\big] \big )^\frac{1}{2}.
	\end{split}					
\end{align*}
Using Lemma \ref{lem::estvar} and previous proved results implying $\E [|X_t|^r]< \infty$, $\E [\|{f}_\theta^t\|_\Theta^r+\|{M}_\theta^t\|_\Theta^r]< \infty$, we obtain
\begin{eqnarray*}
\E \big [\|\widehat{q_t}(\theta)-q_t(\theta)\|_\Theta^{r/2} \big ]	&\leq & C \,  \Big (\sum_{j>t} \alpha_j^{(0)}(f,\Theta)+\alpha_j^{(0)}(M,\Theta)\Big)^\frac{r}{2} \\
& \leq & C \,  t^{-\frac{(\ell-1)r}{2}},		
\end{eqnarray*}
where the last inequality is obtained from the condition \eqref{decSC} of Theorem \ref{SC}. \\
Hence, we have
\begin{align*}
	\begin{split}
\sum_{t\geq 1} {{\frac{1}{t^{r/2}}}} \, \E \big  [|\widehat{q_t}(\theta)-q_t(\theta)|_\Theta^{r/2} \big]	&    \leq A \sum_{t\geq 1}t^{-r \, \ell/2},
	\end{split}					
\end{align*}
which is finite when $r \, \ell >2$. When $r \geq 2$, it is sufficient to consider the case $r=2$. As a consequence, we obtain
\begin{equation}\label{majq}
\frac 1 n \, \sum_{t=1}^n  \big \|\widehat{q_t}(\theta)-q_t(\theta) \big \|_\Theta \limitepsn 0\quad \mbox{and} \quad \frac{1}{n}  \, \big \|\widehat L_n(\theta)-L_n(\theta) \big \|_\Theta\limitepsn  0,
\end{equation}
and therefore, using \eqref{Las},
\begin{equation}\label{convLhat}
\frac{1}{n}  \, \big \|\widehat L_n(\theta)-L(\theta) \big \|_\Theta\limitepsn  0.
\end{equation}
(ii) Now for $\theta \in \Theta$, we study
$$
L(\theta)=-\E  [q_0(\theta)].
$$
which can also be consider as a Kullback-Leibler discripency.
We have
\begin{eqnarray*}
L(\theta)&=& - \E\big  [\log (M_{\theta}^t)+(M_{\theta}^t)^{-1} \, \big |X_t-f_{\theta}^t \big | \big ] \\
&=&- \E\Big  [\log (M_{\theta}^t)+\frac {M_{\theta_0}^t} {M_{\theta}^t} \, \Big |\zeta_t+ \frac {f_{\theta_0}^t-f_{\theta}^t }{M_{\theta_0}^t} \Big | \Big ].
\end{eqnarray*}
Hence, using $\E  [ |\zeta_t|]=1$, we obtain:
\begin{eqnarray*}
L(\theta_0)-L(\theta)
&=&\E\Big  [ \log \big (\frac {M_{\theta}^t}{M_{\theta_0}^t} \big )+\frac {M_{\theta_0}^t} {M_{\theta}^t} \, \Big |\zeta_t+ \frac {f_{\theta_0}^t-f_{\theta}^t }{M_{\theta_0}^t} \Big | -1\Big ] \\
&=&\E \Big [\log \big (\frac {M_{\theta}^t}{M_{\theta_0}^t} \big )-1 + \frac {M_{\theta_0}^t} {M_{\theta}^t} \E\Big [\Big |\zeta_t+ \frac {f_{\theta_0}^t-f_{\theta}^t }{M_{\theta_0}^t} \Big | ~|~(X_{t-k})_{k\geq 1} \Big ] \Big ].
\end{eqnarray*}
But for $\zeta_t$ following a symmetric probability distribution, for any $m\in \R^*$, $\E [|\zeta_t+m|]> \E [|\zeta_t|]=1$. Therefore, for $\theta\neq \theta_0$, if $f_\theta\neq f_{\theta_0}$ (else $>$ is replaced by $\geq$),
\begin{eqnarray*}
L(\theta_0)-L(\theta)
&> &\E \Big [\log \big (\frac {M_{\theta}^t}{M_{\theta_0}^t} \big )-1 + \frac {M_{\theta_0}^t} {M_{\theta}^t} \Big ] \\
& > & h\Big (\frac {M_{\theta_0}^t}{M_{\theta}^t} \Big ),
\end{eqnarray*}
with $h(x)=-\log(x)-1+x$. But for any $x\in (0,1)\cup(1,\infty)$, $h(x)> 0$ and $h(1)=0$. Therefore if $M_\theta\neq M_{\theta_0}$, $h\Big (\frac {M_{\theta_0}^t}{M_{\theta}^t} \Big )>0$ ($>0$ is replaced by $=0$ if $M_\theta= M_{\theta_0}$). This implies from Condition C3 (Identifiability) that $ L(\theta_0)-L(\theta)> 0$ almost surely for all $\theta \in \Theta$, $\theta\neq \theta_0$. Hence a supremum of $L(\theta)$ is only reached for $\theta=\theta_0$ which is the unique maximum.
\end{proof}
\begin{proof}[Proof of Theorem \ref{Davis}]
We follow the same scheme of proof than in \cite{DD}. Hence, denote
\begin{eqnarray*}
S_n&=&\sum_{t=1}^n V_{t-1} \big (| Z_t-n^{-1/2} Y_{t-1} |-|Z_t| \big )  \\
&= & -n^{-1/2} \sum_{t=1}^n V_{t-1}Y_{t-1}  \mbox {sgn} (Z_t) \\
&& \hspace{2cm}+ 2  \sum_{t=1}^n V_{t-1}  ( n^{-1/2} Y_{t-1}-Z_t)\big ( \1 _{0 <Z_t < n^{-1/2} Y_{t-1}} -\1 _{n^{-1/2} Y_{t-1} <Z_t < 0}\big ) \\
& =& A_n + B_n.
\end{eqnarray*}
Since $\E \big [ V_{t-1}Y_{t-1}  \mbox {sgn} (Z_t) ~|~{\cal F}_{t-1} \big ]=\E [ \mbox {sgn} (Z_t)  ] \,  \E \big [ V_{t-1}Y_{t-1} \big ]=0$ and  $\E \big [ V_{0}^2Y_{0}^2 \big ]< \infty$, we can apply a central limit theorem for stationary martingale difference sequence (see \cite{B}) and
\begin{equation}\label{An}
A_n  \limiteloin {\cal N} \big ( 0 \, , \, \E \big [ V_{0}^2Y_{0}^2 \big ] \big ).
\end{equation}
Now, considering $B_n$, define also $W_{nt} =V_{t-1}  ( n^{-1/2} Y_{t-1}-Z_t)\, \1 _{0 <Z_t < n^{-1/2} Y_{t-1}} $. Using the same arguments as in \cite{DD}, we also obtain
\begin{eqnarray*}
&& \bullet \quad \limsup_{n\to \infty} n \, \E \big [ W_{nt} ^2 \big ]=0; \\
&& \bullet \quad \E \big [ W_{nt} ~|~{\cal F}_{t-1} \big ] \simeq \frac 1 {2n} \, f(0) \, V_{t-1} Y_{t-1}^2\quad \mbox{for $|n^{-1/2} Y_{t-1} | <\varepsilon$} ; \\
&& \bullet \quad \sum_{t=1}^n W_{nt}  \limiteproba \frac 1 2 \, f(0) \, \E \big [ V_0 Y_0^2 \, \1 _{Y_0>0} \big ].
\end{eqnarray*}
Then we deduce
\begin{equation}\label{Bn}
B_n \limiteproba f(0) \,  \E \big [ V_0 Y_0^2  \big ].
\end{equation}
The proof is achieved from \eqref{An} and \eqref{Bn}.
\end{proof}

\begin{proof}[Proof of Theorem \ref{AN}]
We follow a proof which is similar to the one of Theorem 2 in \cite{DD} or \cite{LL}. \\
Let $v =\sqrt n (\theta-\theta_0) \in \R^d$. Then we are going to prove in 2/ that maximizing $\widehat L_n(\theta)$ is equivalent to maximizing $L_n(\theta)$ which is equivalent to maximizing
\begin{eqnarray}  \label{eq::m}
W_n(v)&=&-\sum_{t=1}^n \big (q_t(\theta_0+n^{-1/2} v)-q_t(\theta_0)\big )\\
\nonumber
 &=& \sum_{t=1}^n \log\Big (\frac { ({M}_{\theta_0+n^{-1/2} v}^t)^{-1}}  { ({M}_{\theta_0}^t)^{-1}}  \Big )+({ {M}}_{\theta_0}^t)^{-1} | X_t-f^t_{\theta_0}|- ({ {M}}_{\theta_0+n^{-1/2} v}^t)^{-1} \big | X_t-{ {f}}^t_{\theta_0+n^{-1/2} v} \big |
\end{eqnarray}
with respect to $v$. As a consequence, there exists a sequence  $(\widehat v_n)_n$ where $\widehat v_n$ is a maximizer of $W_n(v)$ such as $\widehat v_n=\sqrt n (\widehat \theta_n-\theta_0)$. In 1/ we will provide a limit theorem satisfied by $W_n(v)$.   Then we are going to prove in 3/ that $(W_n(\cdot ))_n$ converges as a process of ${\cal C}(\R^d)$ (space of continuous functions on $\R^d$) to a limit process $W$. Hence $(\widehat v_n)_n$ converges to the maximizer of $W$. \\
~\\
1/ First, we are going to study the asymptotic behavior of $W_n(v)$. We have
\begin{eqnarray*}
W_n(v)&=& \sum_{t=1}^n \log\Big (\frac { ({M}_{\theta_0+n^{-1/2} v}^t)^{-1}}  { ({M}_{\theta_0}^t)^{-1}}  \Big ) +| X_t-f^t_{\theta_0}| \big (({M}_{\theta_0}^t)^{-1}-({ {M}}_{\theta_0+n^{-1/2} v}^t)^{-1} \big )\\&& +\sum_{t=1}^n ({ {M}}_{\theta_0+n^{-1/2} v}^t)^{-1}  \big ( | X_t-f^t_{\theta_0}|- \big | X_t-{ {f}}^t_{\theta_0+n^{-1/2} v} \big | \big )  \\
&=& I_1(v) +I_2(v).
\end{eqnarray*}
We have:
\begin{eqnarray*}
I_1(v)&=& -\sum_{t=1}^n \log\Big (\frac { {M}_{\theta_0+n^{-1/2} v}^t}  { {M}_{\theta_0}^t}  \Big ) +| \zeta_t| \Big (1-\frac{ {M}_{\theta_0}^t}  { {M}_{\theta_0+n^{-1/2} v}^t}   \Big )
\end{eqnarray*}
Using Taylor expansions, we deduce that for each $t \in \{1,\cdots,n\}$, there exists $\overline \theta_1^t$ and $\overline \theta_2^t$ in the segment $[\theta_0,\theta_0+n^{-1/2} v]$ such as:
\begin{eqnarray*}
\log\Big (\frac { {M}_{\theta_0+n^{-1/2} v}^t}  { {M}_{\theta_0}^t}  \Big )& =& n^{-1/2}({M}_{\theta_0}^t)^{-1} v' \,  \Big (\frac {\partial M^t_\theta}{\partial \theta} \Big )_{\theta_0} + \frac 12 \, n^{-1} \Big \{   v' \,  \Big (\frac {\partial^2 M^t_\theta}{\partial \theta^2} \Big )_{\overline \theta_1^t}- ({M}_{\theta_0}^t)^{-2}\Big (v' \,  \Big (\frac {\partial M^t_\theta}{\partial \theta} \Big )_{\overline \theta_1^t}\Big )^2 \Big  \} \\
\frac{ {M}_{\theta_0}^t}  { {M}_{\theta_0+n^{-1/2} v}^t} & =& 1- n^{-1/2}({M}_{\theta_0}^t)^{-1} v' \,  \Big (\frac {\partial M^t_\theta}{\partial \theta} \Big )_{\theta_0}+ \frac 12 \, n^{-1} \Big \{  2({M}_{\theta_0}^t)^{-2}\Big (v' \,  \Big (\frac {\partial M^t_\theta}{\partial \theta} \Big )_{\overline \theta_2^t}\Big )^2 - v' \,  \Big (\frac {\partial^2 M^t_\theta}{\partial \theta^2} \Big )_{\overline \theta_2^t}\Big  \}
\end{eqnarray*}
Then,
\begin{eqnarray*}
I_1(v)&=& n^{-1/2} \sum_{t=1}^n ({M}_{\theta_0}^t)^{-1} v' \,  \Big (\frac {\partial M^t_\theta}{\partial \theta} \Big )_{\theta_0} \big (  | \zeta_t|-1 \big ) + \frac 1 {2n}\sum_{t=1}^n     ({M}_{\theta_0}^t)^{-1}\Big \{  v' \,  \Big (\frac {\partial^2 M^t_\theta}{\partial \theta^2} \Big )_{\overline \theta_2^t} v \, | \zeta_t| - v' \,  \Big (\frac {\partial^2 M^t_\theta}{\partial \theta^2} \Big )_{\overline \theta_1^t} v\Big \} \\
&& \hspace{3cm} + \frac 1 {2n}\sum_{t=1}^n     ({M}_{\theta_0}^t)^{-2}\Big  \{ \Big (v' \,  \Big (\frac {\partial M^t_\theta}{\partial \theta} \Big )_{\overline \theta_1^t}\Big )^2 -2 \,  \Big (v' \,  \Big (\frac {\partial M^t_\theta}{\partial \theta} \Big )_{\overline \theta_2^t}\Big )^2| \zeta_t|  \Big \} \\
&=& I_1^{(1)}(v)+ I_1^{(2)}(v)+ I_1^{(3)}(v).
\end{eqnarray*}
Using a Central Limit Theorem for martingale-differences (see for instance \cite{B}), and since from Lemma \ref{lem::prel}, $\E \big [\big \| ({M}_{\theta_0}^t)^{-1} v' \,  \Big (\frac {\partial M^t_\theta}{\partial \theta} \Big )_{\theta_0} \big \|^2_\Theta\big ]<\infty$ and $\E \big [({M}_{\theta_0}^t)^{-1} v' \,  \Big (\frac {\partial M^t_\theta}{\partial \theta} \Big )_{\theta_0} \big (  | \zeta_t|-1 \big )~|~{\cal F}_{t-1} \big ]=0$, we have:
\begin{equation}\label{I11}
I_1^{(1)}(v) \limiteloin {\cal N} \Big (0 \, , \, \E \Big [ ({M}_{\theta_0}^0)^{-2} \Big ( v' \,  \Big (\frac {\partial M^0_\theta}{\partial \theta} \Big )_{\theta_0}\Big )^2 \Big ] \big ( \sigma^2_\zeta -1 \big ) \Big ).
\end{equation}
Now, using that $\theta  \in \Theta \mapsto \frac {\partial M^t_\theta}{\partial \theta}$ and $\theta \mapsto \frac {\partial^2 M^t_\theta}{\partial \theta^2}$ are continuous functions, $\overline \theta_1^t \limiteloin  \theta_0$ and $\overline \theta_2^t \limiteloin \theta_0$, we claim that $I_1^{(2)}(v)$ have the same limit distribution that $\frac 1 {2n}\sum_{t=1}^n     ({M}_{\theta_0}^t)^{-1} v' \,  \Big (\frac {\partial^2 M^t_\theta}{\partial \theta^2} \Big )_{\theta_0}v \, \big (| \zeta_t| -1 \big )$. From Lemma \ref{lem::prel}, note that $\E \Big  [({M}_{\theta_0}^t)^{-1} v' \,  \Big (\frac {\partial^2 M^t_\theta}{\partial \theta^2} \Big )_{\theta_0}v \, \big (| \zeta_t| -1 \big ) ~|~{\cal F}_{t-1} \Big ]=0$ and $\E \big [\big \| ({M}_{\theta_0}^t)^{-1} v' \,  \Big (\frac {\partial^2 M^t_\theta}{\partial \theta^2} \Big )_{\theta_0} v \big \| \big ]<\infty$. \\ Thus, from the strong large number law  for martingale-differences (see again \cite{B}), we obtain:
$$
\frac 1 {2n}\sum_{t=1}^n     ({M}_{\theta_0}^t)^{-1} v' \,  \Big (\frac {\partial^2 M^t_\theta}{\partial \theta^2} \Big )_{\theta_0}v \, \big (| \zeta_t| -1 \big ) \limitepsn 0,
$$
and this implies:
\begin{equation}\label{I12}
I_1^{(2)}(v) \limiteloin 0.
\end{equation}
Previous arguments induce that $I_1^{(3)}(v)$ has the same limit distribution that $\displaystyle \frac 1 {2n}\sum_{t=1}^n  ({M}_{\theta_0}^t)^{-2}   \Big  (v' \,  \Big (\frac {\partial M^t_\theta}{\partial \theta} \Big )_{\theta_0}\Big )^2 \big (1-2| \zeta_t| \big )$. From the strong large number law  for martingale-differences (see \cite{B}), we obtain:
\begin{eqnarray*}
\frac 1 {2n}\sum_{t=1}^n       ({M}_{\theta_0}^t)^{-2} \Big (v' \,  \Big (\frac {\partial M^t_\theta}{\partial \theta} \Big )_{\theta_0}\Big )^2\, \big (1-2| \zeta_t| \big ) &\limitepsn & \frac 1 2 \,  \E \Big [ ({M}_{\theta_0}^0)^{-2}  \Big (v' \,  \Big (\frac {\partial M^0_\theta}{\partial \theta} \Big )_{\theta_0}\Big )^2\, \big (1-2| \zeta_0| \big ) \Big ] \\
& \limitepsn & -\frac 1 2 \,  \E \Big [({M}_{\theta_0}^0)^{-2}   \Big (v' \,  \Big (\frac {\partial M^0_\theta}{\partial \theta} \Big )_{\theta_0}\Big )^2\Big ],
\end{eqnarray*}
and this implies:
\begin{equation}\label{I13}
I_1^{(3)}(v) \limiteloin -\frac 1 2 \,  \E \Big [ ({M}_{\theta_0}^0)^{-2}   \Big (v' \,  \Big (\frac {\partial M^0_\theta}{\partial \theta} \Big )_{\theta_0}\Big )^2\Big ].
\end{equation}
Finally, from \eqref{I11}, \eqref{I12} and \eqref{I13}, we obtain:
\begin{equation}\label{I1}
I_1(v) \limiteloin {\cal N} \Big (-\frac 1 2 \,  \E \Big [ ({M}_{\theta_0}^0)^{-2}  \Big (v' \,  \Big (\frac {\partial M^0_\theta}{\partial \theta} \Big )_{\theta_0}\Big )^2\Big ]\, , \, \E \Big [ ({M}_{\theta_0}^0)^{-2} \Big ( v' \,  \Big (\frac {\partial M^0_\theta}{\partial \theta} \Big )_{\theta_0}\Big )^2 \Big ] \big ( \sigma^2_\zeta -1 \big ) \Big ).
\end{equation}
Now we consider $\displaystyle I_2(v)= \sum_{t=1}^n ({ {M}}_{\theta_0+n^{-1/2} v}^t)^{-1}  \big ( | X_t-f^t_{\theta_0}|- \big | X_t-{ {f}}^t_{\theta_0+n^{-1/2} v} \big | \big )$. Using again Taylor expansion, we can write:
\begin{eqnarray*}
I_2(v)&=&\sum_{t=1}^n ({ {M}}_{\theta_0}^t)^{-1}  \big ( | X_t-f^t_{\theta_0}|- \big | X_t-{ {f}}^t_{\theta_0+n^{-1/2} v} \big | \big ) \\
&& \hspace{2cm}-n^{-1/2} \sum_{t=1}^n ({ {M}}_{\overline \theta^t_M}^t)^{-2}  \, v' \,  \Big (\frac {\partial M^t_\theta}{\partial \theta} \Big )_{\overline \theta^t_M} \big ( | X_t-f^t_{\theta_0}|- \big | X_t-{ {f}}^t_{\theta_0+n^{-1/2} v} \big | \big ) \\
&=& I_2^{(1)}(v)+I_2^{(2)}(v),
\end{eqnarray*}
with $\overline \theta^t_M$ in the segment $[\theta_0, \theta_0+n^{-1/2} v]$. \\
First we have:
\begin{eqnarray*}
I_2^{(1)}(v)=\sum_{t=1}^n  \big ( | \zeta_t|- \big | \zeta_t -n^{-1/2}  ({ {M}}_{\theta_0}^t)^{-1}v' \,  \big ( \frac {\partial  f^t_{\theta} }{\partial \theta} \big )_{\overline \theta_f^t} \big | \big )
\end{eqnarray*}
with $\overline \theta^t_f$ in the segment $[\theta_0, \theta_0+n^{-1/2} v]$. Using Theorem \ref{Davis}, which an extension of Theorem 1 established in \cite{DD}, denoting $Z_t=\zeta_t$, $Y_t= ({ {M}}_{\theta_0}^t)^{-1}v' \,  \big ( \frac {\partial  f^t_{\theta} }{\partial \theta} \big )_{\overline \theta_f^t}$ and $V_t=1$ for $t \in \Z$,
\begin{equation}\label{I21}
I_2^{(1)} \limiteloin {\cal N} \Big ( -g(0) \,  \E \big [ ({ {M}}_{\theta_0}^0)^{-2} \big (v' \,  \big ( \frac {\partial  f^0_{\theta} }{\partial \theta} \big )_{\theta_0} \big) ^2   \big ] \, , \, \E \big [ ({ {M}}_{\theta_0}^0)^{-2} \big (v' \,  \big ( \frac {\partial  f^0_{\theta} }{\partial \theta} \big )_{\theta_0} \big) ^2   \big ]   \Big ).
\end{equation}
since $\E \big [ Y_t^2V_t^2 \big ] \leq \underline M^{-2 } \E \big [ \big \| v' \,   \frac {\partial  f^t_{\theta} }{\partial \theta} \big \|^2 _\Theta \big ]<\infty $ from Lemma \ref{lem::prel}. \\
Then, we have:
\begin{eqnarray}
\nonumber I_2^{(2)}(v) & \sim &n^{-1/2}  \sum_{t=1}^n ({ {M}}_{\theta_0}^t)^{-2}  \, v' \,  \Big (\frac {\partial M^t_\theta}{\partial \theta} \Big )_{\theta_0} \big ( | X_t-f^t_{\theta_0}|- \big | X_t-{ {f}}^t_{\theta_0+n^{-1/2} v} \big | \big ) \\
\nonumber &\sim & n^{-1/2}  \sum_{t=1}^n (M_{\theta_0}^t)^{-1}  \, v' \,  \Big (\frac {\partial M^t_\theta}{\partial \theta} \Big )_{\theta_0} \big ( | \zeta_t|- \big |\zeta_t-n^{-1/2}(M_{\theta_0}^t)^{-1}   v'\big (\frac {\partial f^t_\theta}{\partial \theta} \big )_{\overline \theta^t_f}  \big | \big ) \\
\label{I22}& \limiteproba & 0,
\end{eqnarray}
using the proof of Theorem \ref{Davis} and denoting $Z_t=\zeta_t$, $Y_t= ({ {M}}_{\theta_0}^t)^{-1}v' \,  \big ( \frac {\partial  f^t_{\theta} }{\partial \theta} \big )_{\overline \theta_f^t}$ and $V_t=(M_{\theta_0}^t)^{-1}  \, v' \,  \Big (\frac {\partial M^t_\theta}{\partial \theta} \Big )_{\theta_0}$ for $t \in \Z$ and condition $\E \big [|V_t Y_t| \big ]<\infty$ insuring a strong law of large number instead of central limit theorem for a martingale difference process. Therefore, from \eqref{I21} and \eqref{I22}, we deduce
\begin{equation}\label{I2}
I_2(v) \limiteloin {\cal N} \Big (-g(0) \,  \E \big [ ({ {M}}_{\theta_0}^0)^{-2} \big (v' \,  \big ( \frac {\partial  f^0_{\theta} }{\partial \theta} \big )_{\theta_0} \big) ^2   \big ]  \, , \,  \E \big [ ({ {M}}_{\theta_0}^0)^{-2} \big (v' \,  \big ( \frac {\partial  f^0_{\theta} }{\partial \theta} \big )_{\theta_0} \big) ^2   \big ]   \Big ).
\end{equation}
Finally, we obtain the behavior of $W_n(v)$ defined in \eqref{eq::m} from  \eqref{I1} and \eqref{I2}. However, we have to specify the asymptotic dependency relation between $I_1^{(1)}$ and $I_2^{(1)}$. Indeed these two terms converge to a Gaussian law. This implies to consider the asymptotic behavior of the sum of these two terms which could be reduced to the asymptotic behavior of:
\begin{eqnarray*}
n^{-1/2} \sum_{t=1}^n ({M}_{\theta_0}^t)^{-1} v' \,  \Big (\frac {\partial M^t_\theta}{\partial \theta} \Big )_{\theta_0} \big (  | \zeta_t|-1 \big ) + n^{-1/2} \sum_{t=1}^n  ({ {M}}_{\theta_0}^t)^{-1}v' \,  \big ( \frac {\partial  f^t_{\theta} }{\partial \theta} \big )_{\theta_0} \,   \mbox {sgn} (\zeta_t),
\end{eqnarray*}
from the proof of Theorem \ref{Davis}. Using again a central limit theorem for martingale differences, we obtain as asymptotic variance:
\begin{eqnarray}
\nonumber
\E \Big [
(M_{\theta_0}^t)^{-2} \Big ( v' \,  \Big (\frac {\partial M^t_\theta}{\partial \theta} \Big )_{\theta_0} \big (  | \zeta_t|-1 \big )
  + v' \,  \Big ( \frac {\partial  f^t_{\theta} }{\partial \theta} \Big )_{\theta_0} \,   \mbox {sgn} (\zeta_t)\Big )^2 \Big ]& =& \E \Big [ ({M}_{\theta_0}^0)^{-2} \Big ( v' \,  \Big (\frac {\partial M^0_\theta}{\partial \theta} \Big )_{\theta_0}\Big )^2 \Big ] \big ( \sigma^2_\zeta -1 \big ) \\
\nonumber &&\hspace{-9cm}+2 \, \E \Big [(M_{\theta_0}^0)^{-2} v' \,  \Big (\frac {\partial M^t_\theta}{\partial \theta} \Big )_{\theta_0} v' \,  \Big ( \frac {\partial  f^0_{\theta} }{\partial \theta} \Big )_{\theta_0}  \Big ]\, \E \big [ \big (  | \zeta_t|-1 \big ) \mbox {sgn} (\zeta_t) \big ]+ \E \Big [(M_{\theta_0}^0)^{-2} \Big (v' \,  \Big ( \frac {\partial  f^0_{\theta} }{\partial \theta} \Big )_{\theta_0} \Big) ^2   \Big ]\\
&&\hspace{-9cm}=\E \Big [ ({M}_{\theta_0}^0)^{-2} \Big \{ \big ( \sigma^2_\zeta -1 \big ) \Big ( v' \,  \Big (\frac {\partial M^0_\theta}{\partial \theta} \Big )_{\theta_0}\Big )^2+\Big (v' \,  \Big ( \frac {\partial  f^0_{\theta} }{\partial \theta} \Big )_{\theta_0} \Big) ^2  \Big \} \Big ]
\label{asymptovar}\end{eqnarray}
since $(\zeta_t)_t$ admits a symmetric probability distribution with a null median and expectation.  Therefore, there is no covariance term and finally we obtain:
\begin{eqnarray}\label{asymptoW}
W_n(v) \limiteloin W(v)=v' \, \big ( -\frac 1 2 \, \Gamma_M -g(0)\, \Gamma_F \big ) \, v +v' \, N \qquad  \\
\label{Gamma}
\mbox{with}
\qquad \left \{ \begin{array}{l}
N \egalloi { \cal N} \big (0 , \, \big ( \big ( \sigma^2_\zeta -1 \big ) \, \Gamma_M + \Gamma_F \big )  \big ) \\
\Gamma_F= \Big (  \E \Big [({M}_{\theta_0}^0)^{-2}\Big ( \frac {\partial f_\theta^0}{\partial \theta_i}\Big ) _{\theta_0} \Big ( \frac {\partial f_\theta^0}{\partial \theta_j}\Big ) _{\theta_0}   \Big ] \Big )_{1\leq i,j\leq d} \\
\Gamma_M= \Big (  \E \Big [\Big ( \frac {\partial \log (M_\theta^0)}{\partial \theta_i}\Big ) _{\theta_0} \Big ( \frac {\partial \log (M_\theta^0)}{\partial \theta_j}\Big ) _{\theta_0}   \Big ] \Big )_{1\leq i,j\leq d}
\end{array}
\right .
\end{eqnarray}
~\\
2/ Now, we consider the approximation $\widehat W_n(v)$ of $W_n(v)$ defined by:
$$
\widehat W_n(v)=-\sum_{t=1}^n \big (\widehat q_t(\theta_0+n^{-1/2} v)-\widehat q_t(\theta_0)\big ) \qquad \mbox{for any $v \in \R^d$.}
$$
From the assumptions of Theorem \ref{SC} and \eqref{majq} we have $\frac 1 n \, \sum_{t=1}^n  \big \|\widehat{q_t}(\theta)-q_t(\theta) \big \|_\Theta \limitepsn 0$. Then we have $\widehat W_n(v)=W_n(v)+ R_n(v)$ with $ \big [ \sup_{v \in \R^d}  |R_n(v)| \big ] \leq 2 \, \sum_{t=1}^n \big [ \big \| \widehat q_t(\theta)-q_t(\theta) \big \|_\Theta \big ] \limitepsn 0$ and then:
\begin{eqnarray}\label{asymptoWhat}
\widehat W_n(v) \limiteloin W(v)
\end{eqnarray}
with $W$ defined in \eqref{asymptoW}. \\
~\\
3/ Now, from \eqref{asymptoWhat}, the proof of Theorem \ref{Davis} and the same arguments than in the proof of Theorem 2 of \cite{DD}, we deduce that finite distributions $(\widehat W_n(v_1),\cdots,\widehat W_n(v_k))$ converge to $(W(v_1),\cdots,W(v_k))$ for any $(v_1,\cdots,v_k)\in (\R^d)^k$. Moreover, always following the proof of Theorem 2, $(W_n(v))_v$ converges to $(W(v))_v$ as a process on the continuous function space ${\cal C}^0$. \\
As a consequence, a maximum $\widehat v$ of $\widehat W_n(v)$ satisfies:
$$
\widehat v=\big (  \Gamma_M +2g(0)\, \Gamma_F \big )^{-1} N,
$$
with $N$ defined in \eqref{Gamma} and this implies \eqref{tlcqmle}.
\end{proof}

\begin{proof}[Proof of Proposition \ref{QMLAPARCH}]
First, Condition {\bf C2} is satisfied since $b_0>0$. Other conditions on Lipschitz coefficients are also satisfied from Lemma \ref{cond2} (see the arguments above). The identifiability condition {\bf C3} is also satisfied from the following which are divided into two parts. In (i) we proof that $\big(\delta, b_0,(b_i^+(\theta),b_i^-(\theta))_{i\geq 1}\big)$ (defined in \eqref{bb}) are unique, thereafter in (ii) we proof that $\theta=\big(\omega,(\alpha_i)_{1\leq i\leq p},(\gamma_i)_{1\leq i\leq p},(\beta_i)_{1\leq i \leq q}\big)$ is also unique.\\
(i) The proof of this result follow the same reasoning in \cite{BHK}. First we have
\begin{eqnarray}\label{di}
  \sigma_t^\delta &=  b_0(\theta)+\sum_{i\geq1} b_i^+(\theta) (\max(X_{t-i},0))^\delta +\sum_{i\geq1} b_i^-(\theta) (\max(-X_{t-i},0))^\delta.
\end{eqnarray}
We prove the result by contradiction. Suppose that there exist two vectors $\beta=\big(\delta, b_0,(b_i^+)_{i\geq 1},(b_i^-)_{i\geq 1}\big)$ and $\beta'= \big(\delta',b_0',(b_i^{+'})_{i\geq 1},(b_i^{-'})_{i\geq 1}\big)$ verifying \eqref{di}. Let $m>0$ be the smallest integer satisfying $b_m^+\neq b_m^{+'}$ or $b_m^- \neq b_m^{-'}$ (if $b_i^+ = b_i^{+'}$ and $b_i^- = b_i^{-'} \ \ \forall i\geq 1$ then $b_0=b_0'$). In one hand, since $x \in (0,\infty)\mapsto x^\delta$ is a one-to-one map and since $\P(X_t=\pm 1,~\forall t \in \Z)=0$, we have $\delta =\delta'$. In the other hand, by definition of $m$, we have
\begin{eqnarray}\label{hhh}
\nonumber \hspace{-1.5cm}(b_m^{+'}-b_m^{+}) (\max(X_{t-i},0))^{\delta} + (b_m^{-'}-b_m^{-}) (\max(-X_{t-m},0))^\delta&& \\
&& \hspace{-8.5cm}
=b_0-b_0'+\sum_{i\geq m+1} (b_i^+-b_i^{+'}) (\max(X_{t-i},0))^\delta + \sum_{i\geq m+1} (b_i^--b_i^{-'}) (\max(-X_{t-i},0))^\delta.
\end{eqnarray}
From \eqref{aparch}, we have $X_{t-m}=\sigma_{t-m} \zeta_{t-m}$, therefore
\begin{eqnarray*}
(b_m^{+'}-b_m^{+}) (\max(X_{t-m},0))^\delta + (b_m^{-'}-b_m^{-}) (\max(-X_{t-m},0))^\delta= \left\{
  \begin{array}{ll}
     (b_m^{+'}-b_m^{+})    \sigma_{t-m}^\delta \zeta_{t-m}^\delta  & \hbox{when $\zeta_{t-m}\geq 0$ }\\
    (b_m^{-'}-b_m^{-})  \sigma_{t-m}^\delta (-\zeta_{t-m})^\delta  & \hbox{when $\zeta_{t-m}< 0$ }  \end{array}
\right.
\end{eqnarray*}
Moreover \eqref{hhh} and the fact that $b_m^+\neq b_m^{+'}$ or $b_m^- \neq b_m^{-'}$ implies that at least one of the following equalities hold
$$
\left\{
  \begin{array}{ll}
     \zeta_{t-m}^\delta =( (b_m^{+'}-b_m^{+})    \sigma_{t-m}^\delta)^{-1} \Big( \sum_{i\geq m+1} (b_i^+-b_i^{+'}) (\max(X_{t-i},0))^\delta\Big)\, & \hbox{when $\zeta_{t-m}\geq 0$ }\\
\qquad \qquad \mbox{or}\\
     (-\zeta_{t-m})^\delta = ((b_m^{-'}-b_m^{-})  \sigma_{t-m}^\delta)^{-1} \Big(\sum_{i\geq m+1} (b_i^--b_i^{-'}) (\max(X_{t-i},0))^\delta\Big)\, & \hbox{when $\zeta_{t-m}< 0$ }  \end{array}
\right.
$$
Since $\sigma_{t-m}^\delta>b_0>0, \ \zeta^\delta_{t-m}$  is well defined. Let $F_k$ be the $F$-algebra generated by $(\zeta_i, i<k)$. The causal representation of tha APARCH$(\delta,p,q)$ shows that $X_j$ is $F_j$-measurable and thus the right-hand side of the above equations  (and consequently also $\zeta_{t-m}^\delta$ in the case $\zeta_{t-m}\geq 0$ or the case $\zeta_{t-m}< 0$) is a real-valued random variable, measurable with respect to $F_{t-m-1}$. Since $(\zeta_{j})$ is a sequence of independent random variables, this implies that $\zeta_{t-m}$ is a.s. constant when $\zeta_{t-m}\geq 0$ or when $\zeta_{t-m}< 0$, contradicting the hypothesis saying $\zeta_0^\delta$ has a non-degenerate distribution. This achieves (i).\\
(ii) The representation \eqref{di} is the same as
\begin{eqnarray*}
  \sigma_t^\delta &=&  b_0+ \Psi^+(L) (\max(X_{t},0))^\delta +\Psi^-(L) (\max(-X_{t},0))^\delta.
\end{eqnarray*}
with $ \Psi^+={\Upsilon_{\theta_1}^{-1}}{\Delta_{\theta_2}^+}$, $\Psi^-={\Upsilon_{\theta_1}^{-1}}{\Delta_{\theta_2}^-}$ and $\Delta_{\theta_2}^+(L)=\sum_{i=1}^p\alpha_i(1-\gamma_i)L^i, \Delta_{\theta_2}^-(L)=\sum_{i=1}^p\alpha_i(1+\gamma_i)L^i$ and $\Upsilon_{\theta_1}(L)=\sum_{i=1}^q\beta_i L^i$,  where $(\Delta_{\theta_2}^+,\Upsilon_{\theta_1})$ and $(\Delta_{\theta_2}^-,\Upsilon_{\theta_1})$ respectively coprime and $\theta_1=(\beta_i)_{1 \leq i \leq q}$, $\theta_2=\big((\alpha_i)_{1 \leq i \leq p},(\gamma_i)_{1 \leq i \leq p}\big)$, then $\theta=(\omega,\theta_1,\theta_2)$.\\
Suppose that there exist others polynomials $\Delta_{\theta_2'}^{+}=\sum_{i=1}^p\alpha_i'(1-\gamma_i')L^i,~ \Delta_{\theta_2'}^{-}=\sum_{i=1}^p\alpha_i'(1+\gamma_i')L^i, ~\Upsilon_{\theta_2'}=\sum_{i=1}^q\beta_i' L^i$ satisfying $\Psi^+=\Upsilon_{\theta_1'}^{-1}{\Delta_{\theta_2'}^{+}}, ~\Psi^-=\Upsilon_{\theta_1'}^{-1}{\Delta_{\theta_2'}^{-}}$ with $(\Delta_{\theta_2'}^{+},~\Upsilon_{\theta_1'})$, $(\Delta_{\theta_2'}^{-},\Upsilon_{\theta_1'})$ respectively coprime. Then
\begin{center}
$\left\{
  \begin{array}{ll}
     {\Upsilon_{\theta_1}^{-1}}{\Delta_{\theta_2}^+}={\Upsilon_{\theta_1'}^{-1}}{\Delta_{\theta_2'}^{+}}\\
 \\
     {\Upsilon_{\theta_1}^{-1}}{\Delta_{\theta_2}^-}={\Upsilon_{\theta_1'}^{-1}}{\Delta_{\theta_2'}^{-}}
     \end{array}
\right.
\Rightarrow \left\{
  \begin{array}{ll}
     {\Delta_{\theta_2}^+}=\big(\Upsilon_{\theta_1}{\Upsilon_{\theta_1'}^{-1}}\big){\Delta_{\theta_2'}^{+}}\\
 \\
     {\Delta_{\theta_2}^-}=\big(\Upsilon_{\theta_1}{\Upsilon_{\theta_1'}^{-1}}\big){\Delta_{\theta_2'}^{-}}
     \end{array}
\right.$
\end{center}
from the first equality, since $\deg(\Delta_{\theta_2}^+)=\deg(\Delta_{\theta_2'}^{+})=q$, we conclude that $\Upsilon_{\theta_1}{\Upsilon_{\theta_1'}^{-1}}=1$, therefore $\Upsilon_{\theta_1}={\Upsilon_{\theta_1'}}$ and so ${\Delta_{\theta_2}^+}={\Delta_{\theta_2'}^{+}}$, likewise from the second equality we conclude that ${\Delta_{\theta_2}^-}={\Delta_{\theta_2'}^{-}}$.
\begin{itemize}
  \item The equalities $     {\Delta_{\theta_2'}^+}={\Delta_{\theta_2}^+}$, ${\Delta_{\theta_2'}^-}={\Delta_{\theta_2}^-}$ implies that $\alpha_i(1-\gamma_i)=\alpha_i'(1-\gamma_i')$ and $\alpha_i(1+\gamma_i)=\alpha_i'(1+\gamma_i')$ which give $\alpha_i=\alpha_i'$ and $\gamma_i=\gamma_i'$.
  \item The equality $\Upsilon_{\theta_1}=\Upsilon_{\theta_1'}$ implies that $\beta_i=\beta_i'$.
  \item Since $ (\beta_i)_{i=1,p}, b_0=w(1-\sum_{j=1}^p\beta_j)^{-1}$ are unique then  $\omega$ is unique.
\end{itemize}
Thus, Condition {\bf C3} is established and the proof of proposition is achieved.
\end{proof}

\begin{proof}[Proof of Proposition \ref{QMLArmagarch}]
Since we prove that Lemma \ref{cond2} implies that conditions on Liptshitzian coefficients $(\alpha_j^{(i)}(f,\Theta))_j$ and $(\alpha_j^{(i)}(M,\Theta))_j$, it remains to prove conditions {\bf C2} and  {\bf C3}. Condition {\bf C2} holds since $c_0$ is supposed to be a positive number. Finally, condition {\bf C3} also holds since $f_\theta=f_{\theta'}$ implies $\psi_j(\theta)=\psi_j(\theta')$ for all $j\in \Z$. Therefore the parameters of the ARMA part of the process are identified and then the identification of the parameters GARCH can be deduced from the proof of Proposition \ref{QMLAPARCH}.
\end{proof}
\begin{proof}[Proof of Proposition \ref{QMLArmaparch}]
This proofs mimics exactly the proof of Proposition \ref{QMLArmagarch}.
\end{proof}

\vskip0.2cm
\noindent {\bf Acknowledgements.} The authors are grateful to the referees and associated editor for their comments which helped to improve the contents of this paper.\\

\end{document}